\newtheorem{theo}{Theorem}
\newtheorem{prop}{Proposition}[section]
\newtheorem{coro}[prop]{Corollary}
\newtheorem{lemma}[prop]{Lemma}
\newtheorem{conj}[prop]{Conjecture}
\theoremstyle{definition}
\newtheorem{defin}[prop]{Definition}
\newtheorem{example}[prop]{Example}
\newtheorem{remark}[prop]{Remark}
\newcommand{\North}{\operatorname{North}}
\newcommand{\South}{\operatorname{South}}
\newcommand{\East}{\operatorname{East}}
\newcommand{\West}{\operatorname{West}}
\newcommand{\NE}{\operatorname{NE}}
\newcommand{\SE}{\operatorname{SE}}
\newcommand{\NW}{\operatorname{NW}}
\newcommand{\SW}{\operatorname{SW}}
\newcommand{\bt}{{\mathbf t}}
\newcommand{\sign}{\operatorname{sign}}
\newcommand{\NN}{{\mathbb{N}}}
\newcommand{\ZZ}{{\mathbb{Z}}}
\newcommand{\RR}{{\mathbb{R}}}
\newcommand{\cL}{{\cal L}}
\newcommand{\cD}{{\cal D}}
\newcommand{\cR}{{\cal R}}
\newcommand{\cM}{{\cal M}}
\newcommand{\cX}{{\cal X}}
\newcommand{\Tw}{\operatorname{Tw}}
\newcommand{\TTw}{\operatorname{TTw}}
\begin{document}
\title{Slab tilings, flips and the triple twist}
\author{George~L.~D.~Alencar \and Nicolau~C.~Saldanha \and
Arthur~M.~M.~Vieira}

\maketitle


\begin{abstract}
A \textit{domino} is a $2\times 1\times 1$ parallelepiped
formed by the union of two unit cubes
and a \textit{slab} is a $2\times 2\times 1$ parallelepiped
formed by the union of four unit cubes.
We are interested in tiling
regions formed by the finite union of unit cubes.
Domino tilings have been studied before;
here we investigate \textit{slab tilings}.
As for domino tilings, a flip in a slab tiling is a local move:
two neighboring parallel slabs
are removed and placed back in a different position.

Inspired by the twist for domino tilings,
we construct a flip invariant for slab tilings:
the \textit{triple twist}, assuming values in $\ZZ^3$.
We show that if the region is a large box
then the triple twist assumes
a large number of possible values,
roughly proportional to the fourth power of the volume.
We also give examples of smaller regions for which
the set of tilings is connected under flips,
so that the triple twist assumes only one value.%
\end{abstract}

\footnotetext{2010 {\em Mathematics Subject Classification}.
Primary 05B45; Secondary 52C20, 52C22, 05C70.
{\em Keywords and phrases} Three-dimensional tilings,
dominoes, dimers, slabs}


\section{Introduction}
\label{section:introduction}

In this paper we consider tilings of cylinders,
of which rectangular boxes are a special case.
A cylinder $\cR$ is a cubiculated region in $\mathbb{R}^3$
of the form $\cR = \cD\times [0,N]$,
where $\cD$ is a disk, a quadriculated connected and
simply connected region on the $z=0$ plane.
Unit cubes are of the form $[x,x+1]\times[y,y+1]\times[z,z+1]$
where $(x,y,z) \in \ZZ^3$.

We give a brief summary of the directly relevant results
about 2d and 3d domino tilings.
We choose to depict a three-dimensional domino tiling by floors,
numbered from left to right.
The $n$-th floor from left to right
corresponds to $z \in [n,n+1]$, $n \in \ZZ$.
A domino which is contained in only one floor
(named \textit{horizontal} domino)
is represented as a $2\times 1$ light grey rectangle.
A domino which crosses two floors (named \textit{vertical})
is represented as two $1\times 1$ squares
which have the same projection on the first floor,
with the square on the lower floor (left) being painted dark grey
and the square on the upper floor (right) being painted white.
Figure~\ref{fig:cobtw} shows a domino tiling of the $3\times 3\times 2$ box.
On the left, the floor $z \in [0,1]$; on the right, the floor $z \in [1,2]$.
The $x$ and $y$ axes are usually drawn as in this example.
For convenience, we may at times speak of a region
using vocabulary of the floor diagram for the region.
That is, we talk of the corresponding square of the floor diagram
when referring to a certain cube.

\begin{figure}[ht]
\centering
 \def\svgwidth{08cm}
\begingroup%
  \makeatletter%
  \providecommand\color[2][]{%
    \errmessage{(Inkscape) Color is used for the text in Inkscape, but the package 'color.sty' is not loaded}%
    \renewcommand\color[2][]{}%
  }%
  \providecommand\transparent[1]{%
    \errmessage{(Inkscape) Transparency is used (non-zero) for the text in Inkscape, but the package 'transparent.sty' is not loaded}%
    \renewcommand\transparent[1]{}%
  }%
  \providecommand\rotatebox[2]{#2}%
  \newcommand*\fsize{\dimexpr\f@size pt\relax}%
  \newcommand*\lineheight[1]{\fontsize{\fsize}{#1\fsize}\selectfont}%
  \ifx\svgwidth\undefined%
    \setlength{\unitlength}{300bp}%
    \ifx\svgscale\undefined%
      \relax%
    \else%
      \setlength{\unitlength}{\unitlength * \real{\svgscale}}%
    \fi%
  \else%
    \setlength{\unitlength}{\svgwidth}%
  \fi%
  \global\let\svgwidth\undefined%
  \global\let\svgscale\undefined%
  \makeatother%
  \begin{picture}(1,0.33666667)%
    \lineheight{1}%
    \setlength\tabcolsep{0pt}%
    \put(0,0){\includegraphics[width=\unitlength,page=1]{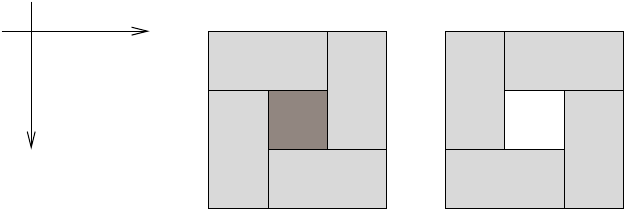}}%
    \put(0.18259849,0.2299187){\color[rgb]{0,0,0}\makebox(0,0)[lt]{\lineheight{1.25}\smash{\begin{tabular}[t]{l}$y$\end{tabular}}}}%
    \put(0.08785177,0.0593746){\color[rgb]{0,0,0}\makebox(0,0)[lt]{\lineheight{1.25}\smash{\begin{tabular}[t]{l}$x$\end{tabular}}}}%
  \end{picture}%
\endgroup%

\caption{Domino tiling of a $3\times 3\times 2$ box}
\label{fig:cobtw}
\end{figure}

We are interested in local moves.
The most natural local move is a \textit{flip}:
remove two parallel dominoes and place them back in
the only other possible position.
We provide a graph theoretical formulation of this scenario.
Given a region $\cD$, construct a new graph,
where vertices are domino tilings $\bt$ of $\cD$
and edges join pairs $\bt_0, \bt_1$ of tilings which differ by a flip.
If $\cD \subset \RR^2$ is a quadriculated disk,
i.e., a bounded 2d contractible region,
then this graph is connected.
Equivalently,
any two tilings of $\cD$ can be joined
by a finite sequence of flips \cite{thurston1990}.
Applied to other regions (possibly 3d),
this construction allows us to speak of
\textit{connected components} under flips.

For 3d domino tilings, this new graph is almost never connected.
As a first example, consider the box in Figure~\ref{fig:cobtw}
and the graph whose vertices are its tilings.
This box admits $229$ tilings
(see \cite{regulardisk},
which includes larger examples).
The tiling in Figure~\ref{fig:cobtw} admits no flips
(it corresponds to an isolated vertex in the graph).
The mirror image is likewise isolated.
The other $227$ tilings
form a single connected component under flips.
It might be conjectured after this example
that \textit{almost} all tilings of a 3d box
can be joined by a finite number of flips:
as we shall see, this is also not correct.

Given a contractible cubiculated region $\cR \subset \RR^3$,
there exists a map $\Tw$ (twist)
from the set of domino tilings of $\cR$ to $\ZZ$.
There are several ways to construct the function $\Tw$:
see \cite{FKMS,KS} for more general constructions.
For completeness,
we include the construction from \cite{segundoartigo}
in Section~\ref{section:dominotwist}:
that construction is valid if $\cR$ is a cylinder.
This map has several important properties.
The main property is that twist is an invariant under domino flips,
so that connected components under flips
are contained in level sets of $\Tw$.
For the $3 \times 3\times 2$ box,
the tiling in Figure~\ref{fig:cobtw} has $\Tw = -1$;
its mirror image has $\Tw = +1$.
The other $227$ tilings all have $\Tw = 0$.

Twist also works well with symmetries of the region $\cR$.
For instance, with our construction,
if $\cR$ is a cylinder
then the twist is left unchanged after a rotation in $\RR^3$.
Also for cylinders,
the twist changes sign under reflections by any of the planes
$x=0$, $y=0$ or $z=0$.

For domino tilings,
there is another important local move besides the flip:
the trit (\cite{segundoartigo,FKMS}).
In order to perform a trit in a tiling $\bt$,
search for three dominoes 
whose union is a $2\times 2\times 2$ cube minus
two unit cubes in opposite vertices.
The trit is then performed by removing the three dominoes
and placing them back in the only other possible position.
A trit always changes the twist by adding $\pm 1$,
where the sign depends on orientation.
The tiling in Figure~\ref{fig:cobtw}
admits four possible trits,
all of them taking us to tilings of twist $0$.

There is no fixed finite set of local moves
connecting all domino tilings
which works for any contractible cubiculated region $\cR \subset \RR^3$.
It is conjectured that any two tilings of a box
can be connected by a finite sequence of flips and trits.
It is known for large classes of boxes
that this holds for \textit{almost all} tilings \cite{regulardisk}.
It is also known that any tiling of a box admits
at least one flip or trit \cite{HLT}.

It is conjectured that for large boxes
the twist follows a normal distribution.
This is known to be true for boxes of size $L\times M\times N$
where $L, M \ge 4$ are even and fixed and $N$ tends to infinity \cite{normal}.
In this case, it is also known that
for almost all pairs $(\bt_0, \bt_1)$ of tilings
with $\Tw(\bt_0) = \Tw(\bt_1)$ 
there exists a finite sequence of flips connecting $\bt_0$ and $\bt_1$.
This implies that connected components under flips
almost coincide with level sets of the function $\Tw$.
Equivalently, within a (large) level set of $\Tw$
there exists a giant component.


\bigskip

The aim of the present paper is to investigate
related questions for \textit{slab} tilings of 3d regions.
A slab is a $2\times 2\times 1$ cuboid.
Let us draw a small example in Figure~\ref{fig:cob}.
As for domino tilings, we depict a slab tiling by floors,
numbered from left to right.
A slab which is contained in only one floor (\textit{horizontal})
is represented as a $2\times 2$ light grey square.
A slab which crosses two floors (\textit{vertical})
is represented as two planar dominoes.
These two dominoes must have the same projection on the first floor.
The domino on the lower (left) floor is painted dark grey
and the domino on the upper (right) floor is painted white.

\begin{figure}[ht]
    \centering
 \includegraphics[scale=0.2]{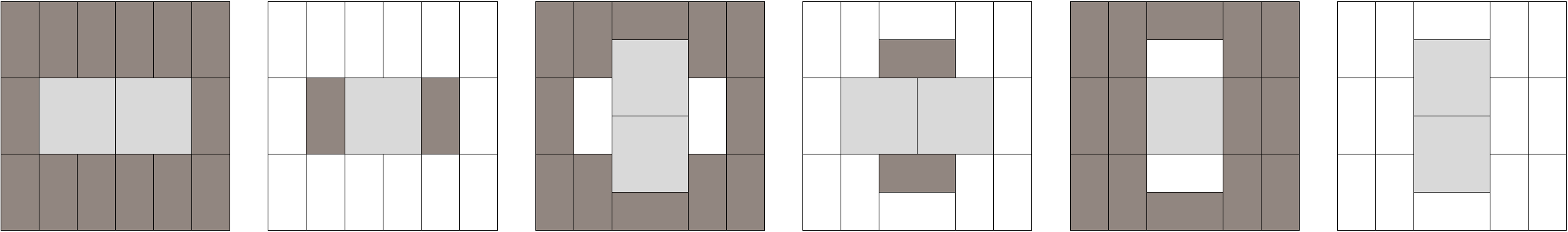}
    \caption{A slab tiling of a $6\times 6\times 6$ box}
    \label{fig:cob}
\end{figure}

We define \textit{flips} for slabs in the same way as for dominoes:
we take two slabs which have a $2\times 2$ face in common,
remove them from the region,
and then place them back in one of the two other positions. 

\goodbreak

A natural question is to investigate connected components via flips
of the set of slab tilings of a given region.
The existence of the twist as an invariant under flips
is crucial for the study of similar questions for domino tilings:
does a similar invariant exist for slab tilings?
As we shall see, the answer is yes,
but there are many significant differences.
The new invariant is the \textit{triple twist} ($\TTw$),
which assumes values in $\ZZ^3$.
Its construction, which takes a large portion of the present paper,
shows that it is related to the twist (for dominoes)
and indeed invariant under flips.
The triple twist assumes a large number of values.



\bigbreak

\begin{theo}
\label{theo:tripletwist}
Consider a cubiculated box $\cR$ of size
$N \times N \times N$, $N \in 2\NN$.
If $N$ is sufficiently large then
there exists positive constants $C_{-} < C_{+}$ with the following properties.
If $\bt$ is a slab tiling of $\cR$ then $|\TTw(\bt)| < C_{+} N^4$.
If $t \in (2\ZZ)^3$ and $|t| < C_{-} N^4$ then
there exists a tiling $\bt$ of $\cR$ with $\TTw(\bt) = t$.
\end{theo}

\goodbreak

The previous theorem gives a lower bound for the number
of connected components under flips
of the set of slab tilings of a large box
(see Proposition~\ref{bound} for an explicit formula).
We easily obtain an upper bound for the set of values of $\TTw$,
but the number of connected components appears to be much larger
(and harder to estimate).
In some simple cases,
such as a $4\times 4\times N$ or an $M\times N\times 2$ box,
there is only one connected component.

The construction of the triple twist leads us to consider
\textit{mixed tilings},
consisting of horizontal slabs and vertical dominoes.
Given a slab tiling,
we construct a corresponding mixed tiling
(Section~\ref{section:mixed}).
If the initial region is a box,
this construction can be performed in any of the three canonical directions.
Given in turn a mixed tiling,
we use a four color pattern to obtain a corresponding domino tiling
(Section~\ref{section:coloring}):
we then compute its twist.
The triple twist is computed by performing
the above construction in all three directions.
As for the domino tilings,
these new versions of the twist are well behaved under symmetries.

As for domino tilings, there exist examples
of slab tilings which admit no flip
(see Figures~\ref{665} and \ref{885}).
There are similar examples of mixed tilings
which admit no flip (see Figure~\ref{fig:mixednotslab}).
We find it more surprising to observe that there exists
no fixed set of local moves which makes
the set of slab tilings connected under such moves.
More than that:
given a fixed set of local moves
(involving a bounded number of slabs)
there exist slab tilings which admit
none of these local moves
(see Lemma~\ref{lemma:moves}, Corollary~\ref{coro:nolocal}
and Remark~\ref{rem:nolocal}).


\smallskip

In Section~\ref{section:dominotwist}
we define and review basic properties of the twist for domino tilings.
We introduce mixed tilings in Section~\ref{section:mixed}.
In Section~\ref{section:coloring}
we consider a way of using four colors
to paint the unit cubes
in order to construct the twist invariants for mixed and slab tilings.
In Section~\ref{section:properties}
we discuss some properties of these invariants,
including the effect of symmetries.
Section~\ref{section:localmoves}
includes examples and results concerning
connectivity under local moves.
Theorem~\ref{theo:tripletwist} 
and related results are proved
in Section~\ref{section:valuestwist}.
Finally, in Section \ref{section:final}
we list some open problems.


\section{Construction of the twist}
\label{section:dominotwist}

This section contains one among several known constructions
of the \textit{twist}
for domino tilings of a cubiculated region $\cR \subset \RR^3$.
This is included for completeness; we follow \cite{segundoartigo}.
See \cite{FKMS, KS} for alternative constructions,
sometimes more general.

The \textit{effect} is
a function which takes two dominoes in a domino tiling $\bt$
and returns a value in $\{0,\pm 1\}$.
On $\bt$, we choose a horizontal domino $d_0$
such that the line through 
the centers of two cubes covered by $d_0$
is parallel to the base vector $\mathbf{e_2}$.
Let $d_1$ be a vertical domino in $\bt$.
We say $d_1$ \textit{affects} $d_0$ if any of the cubes of $d_1$
is on the same floor as $d_0$
and if the projections of the interiors of $d_0$ and $d_1$
over the plane $x=0$ have non-empty intersection.
Figure~\ref{fig:calctw} shows some examples for which there are effects.
If these conditions do not hold we say the effect of $d_1$ on $d_0$ is $0$.

In the case $d_1$ does affect $d_0$,
we define the effect as a sign based on some parameters.
The first is whether $d_1$ is on the left or on the right side of $d_0$.
The second is whether $d_1$ is above or below $d_0$.
The third is whether the distance between $d_1$ and $d_0$ is odd or even.
And the fourth and final is whether $d_1$'s lower half cube
is on or below $d_0$'s floor.
As a convention, the effect on the first example of Figure~\ref{fig:calctw}
is set as $+1$,
and a change on any of the parameters
above yields a change of sign on the effect.

\begin{figure}[ht]
    \centering
\def\svgwidth{12cm}
\begingroup%
  \makeatletter%
  \providecommand\color[2][]{%
    \errmessage{(Inkscape) Color is used for the text in Inkscape, but the package 'color.sty' is not loaded}%
    \renewcommand\color[2][]{}%
  }%
  \providecommand\transparent[1]{%
    \errmessage{(Inkscape) Transparency is used (non-zero) for the text in Inkscape, but the package 'transparent.sty' is not loaded}%
    \renewcommand\transparent[1]{}%
  }%
  \providecommand\rotatebox[2]{#2}%
  \newcommand*\fsize{\dimexpr\f@size pt\relax}%
  \newcommand*\lineheight[1]{\fontsize{\fsize}{#1\fsize}\selectfont}%
  \ifx\svgwidth\undefined%
    \setlength{\unitlength}{675bp}%
    \ifx\svgscale\undefined%
      \relax%
    \else%
      \setlength{\unitlength}{\unitlength * \real{\svgscale}}%
    \fi%
  \else%
    \setlength{\unitlength}{\svgwidth}%
  \fi%
  \global\let\svgwidth\undefined%
  \global\let\svgscale\undefined%
  \makeatother%
  \begin{picture}(1,0.25185185)%
    \lineheight{1}%
    \setlength\tabcolsep{0pt}%
    \put(0,0){\includegraphics[width=\unitlength,page=1]{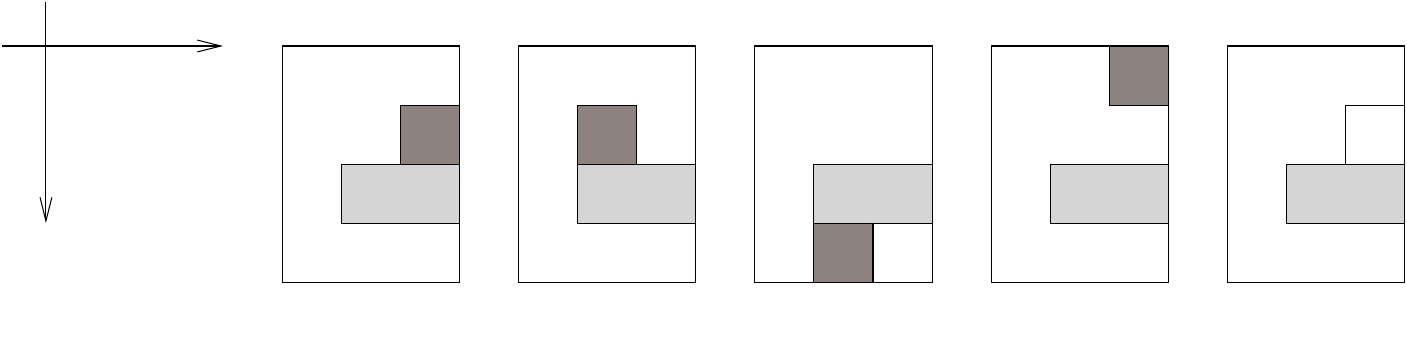}}%
    \put(0.12089246,0.18133108){\color[rgb]{0,0,0}\makebox(0,0)[lt]{\lineheight{1.25}\smash{\begin{tabular}[t]{l}$y$\end{tabular}}}}%
    \put(0.05787901,0.06790688){\color[rgb]{0,0,0}\makebox(0,0)[lt]{\lineheight{1.25}\smash{\begin{tabular}[t]{l}$x$\end{tabular}}}}%
    \put(0.26372293,0.00909433){\color[rgb]{0,0,0}\makebox(0,0)[lt]{\lineheight{1.25}\smash{\begin{tabular}[t]{l}$+1$\end{tabular}}}}%
    \put(0.59979462,0.00909433){\color[rgb]{0,0,0}\makebox(0,0)[lt]{\lineheight{1.25}\smash{\begin{tabular}[t]{l}$+1$\end{tabular}}}}%
    \put(0.76783047,0.00909433){\color[rgb]{0,0,0}\makebox(0,0)[lt]{\lineheight{1.25}\smash{\begin{tabular}[t]{l}$-1$\end{tabular}}}}%
    \put(0.93586632,0.00909433){\color[rgb]{0,0,0}\makebox(0,0)[lt]{\lineheight{1.25}\smash{\begin{tabular}[t]{l}$-1$\end{tabular}}}}%
    \put(0.43175878,0.00909433){\color[rgb]{0,0,0}\makebox(0,0)[lt]{\lineheight{1.25}\smash{\begin{tabular}[t]{l}$-1$\end{tabular}}}}%
  \end{picture}%
\endgroup%

    \caption{Examples of effects for pairs of dominoes}
    \label{fig:calctw}
\end{figure}

We then define the twist $\Tw(\bt)$ as
$\frac{1}{4}$ times the sum of all the effects
of $d_1$ on $d_0$ for all the suitable pairs $(d_0,d_1)$.
If $\cR$ is a cylinder and $\bt$ is a tiling of $\cR$
then $\Tw(\bt)$ is an integer \cite{segundoartigo}.

For example, the twist of the tiling in Figure~\ref{fig:cobtw} is $-1$,
since each floor has a contribution of $-\frac{1}{2}$.
If we appy a trit to this tiling,
the twist of the new tiling is $0$.
In Figure~\ref{fig:415} we show two examples of tilings of cylinders,
and in Figure~\ref{fig:413} we rotate each of them,
noting the twist is unchanged,
consistently with the remarks in the Introduction.

\begin{figure}[ht]
\centering
 \includegraphics[scale=0.3]{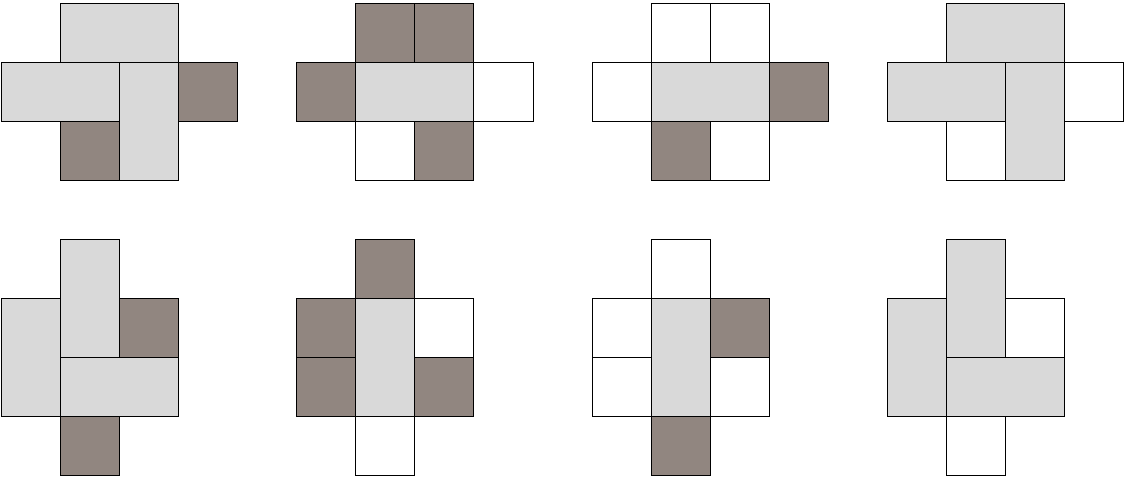}
\caption{Two examples of tilings of cylinders;
in both cases, $\Tw = 0$}
\label{fig:415}
\end{figure}

\begin{figure}[ht]
\centering
  \includegraphics[scale=0.3]{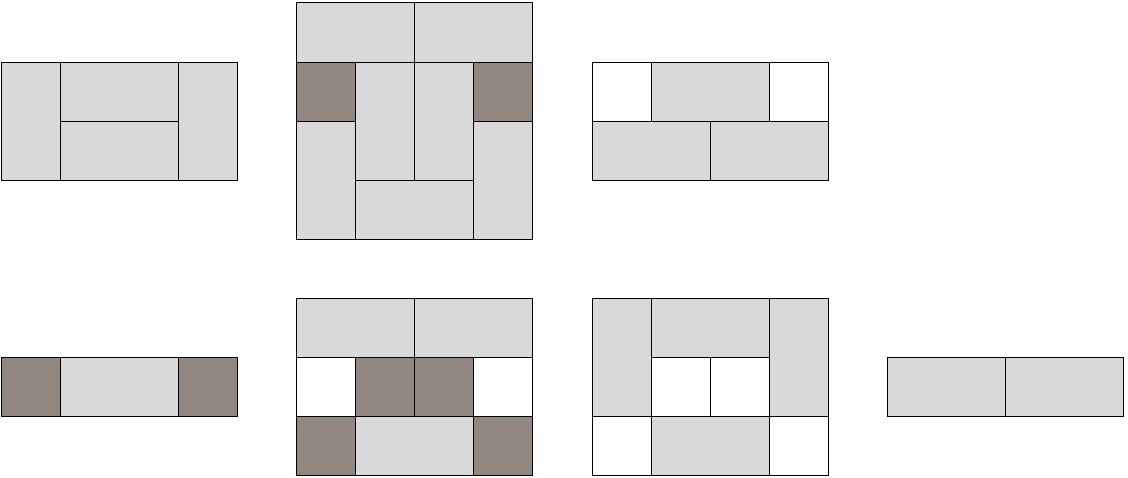}

\caption{The $x$-view of the two tilings in Figure~\ref{fig:415},
i.e., the same tilings after a rotation;
we still have $\Tw=0$}
\label{fig:413}
\end{figure}


\section{Mixed tilings}
\label{section:mixed}

The study of slab tilings led us to consider 
another class of tilings: \textit{mixed tilings}.
Given a three dimensional region $\cR \subset \RR^3$
with a specified vertical direction,
a mixed tiling is a decomposition of $\cR$
into horizontal slabs and vertical dominoes.
We admit as mixed tilings
the configurations where there are
only horizontal slabs or only vertical dominoes. 

\begin{figure}[ht]
    \centering
\includegraphics[height=1.5cm]{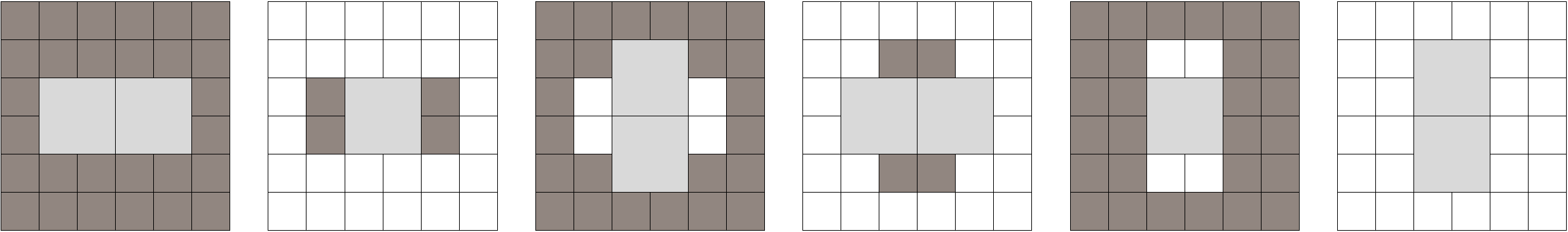}

    \caption{A non trivial example of mixed tiling}
    \label{fig:mixed}
\end{figure}

The mixed tiling in Figure~\ref{fig:mixed} can be obtained
from the slab tiling in Figure~\ref{fig:cob}
by slicing each vertical slab in half along a plane perpendicular to the paper.
This process of slicing vertical slabs in half
to transform a slab tiling into a mixed tiling can always be done.
However, it is not always possible to undo this process
in a well defined manner.
For example, in Figure~\ref{fig:mixed}
there are many ways to join the vertical dominoes
which cross the first and second floor.
Also, there are mixed tilings which do not come from any slab tiling,
as in Figure~\ref{fig:mixednotslab}.

\begin{figure}[ht]
\centering
\includegraphics[height=1.5cm]{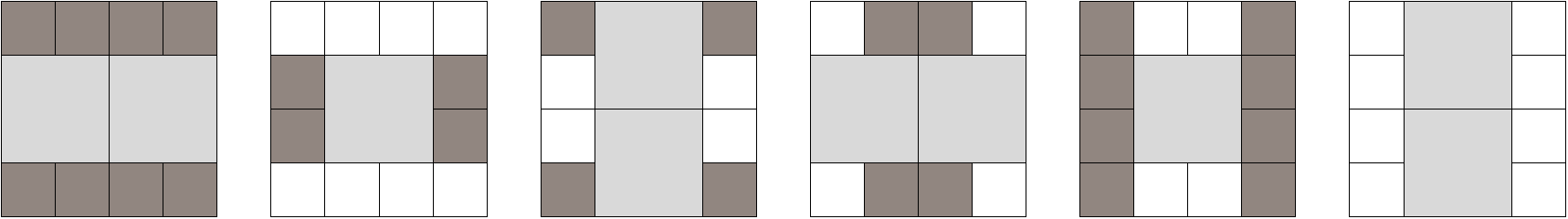}

\caption{A mixed tiling which does not correspond to a slab tiling,
obtained from the tiling in Figure~\ref{fig:mixed}
by taking the central $4\times 4\times 6$ box}
\label{fig:mixednotslab}
\end{figure}

We can also define a flip in a mixed tiling.
In one way, we take two adjacent slabs which lie on different floors,
perform a slab flip and then slice the resulting vertical slabs in half.
In the other way, we take four vertical dominoes
which form a $2\times 2\times 2$ cube,
join two pairs of adjacent dominoes
and perform a slab flip to end with two horizontal slabs.
The mixed tiling in Figure~\ref{fig:mixednotslab} admits no flip;
See Figure~\ref{fig:flipmixed} for an example of a flip.

\begin{figure}[ht]
    \centering
\includegraphics[height=1.5cm]{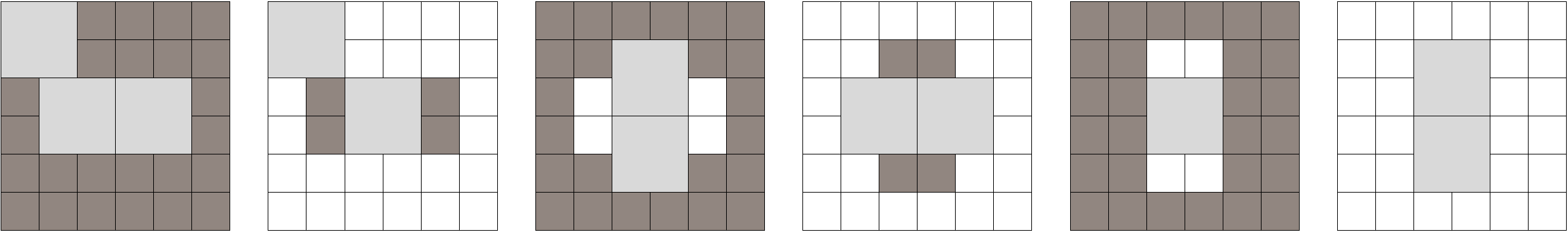}

    \caption{We show one flip performed on the tiling of Figure~\ref{fig:mixed}}
    \label{fig:flipmixed}
\end{figure}


\section{Coloring and twist}
\label{section:coloring}

The goal of this section is to describe
the process of transforming a slab or mixed tiling $\bt$
of a given cylinder $\cR$ into
a domino tiling $\tilde{\bt}$ of another cylinder $\tilde{\cR}$.
This will be used in the end to define the twist of a mixed tiling
and the triple twist of a slab tiling.

The first step of this process is to consider a certain coloring
of the cubes which compose our cylinder.
The coloring uses four colors and must meet the condition
that each slab covers exactly one cube of each color.
Provisionally, this condition will be called \textit{slab-type}.
We use the colors green, blue, red and yellow.

\begin{figure}[ht]
    \centering
\def\svgwidth{12cm}
\begingroup%
  \makeatletter%
  \providecommand\color[2][]{%
    \errmessage{(Inkscape) Color is used for the text in Inkscape, but the package 'color.sty' is not loaded}%
    \renewcommand\color[2][]{}%
  }%
  \providecommand\transparent[1]{%
    \errmessage{(Inkscape) Transparency is used (non-zero) for the text in Inkscape, but the package 'transparent.sty' is not loaded}%
    \renewcommand\transparent[1]{}%
  }%
  \providecommand\rotatebox[2]{#2}%
  \newcommand*\fsize{\dimexpr\f@size pt\relax}%
  \newcommand*\lineheight[1]{\fontsize{\fsize}{#1\fsize}\selectfont}%
  \ifx\svgwidth\undefined%
    \setlength{\unitlength}{1334bp}%
    \ifx\svgscale\undefined%
      \relax%
    \else%
      \setlength{\unitlength}{\unitlength * \real{\svgscale}}%
    \fi%
  \else%
    \setlength{\unitlength}{\svgwidth}%
  \fi%
  \global\let\svgwidth\undefined%
  \global\let\svgscale\undefined%
  \makeatother%
  \begin{picture}(1,0.14992504)%
    \lineheight{1}%
    \setlength\tabcolsep{0pt}%
    \put(0,0){\includegraphics[width=\unitlength,page=1]{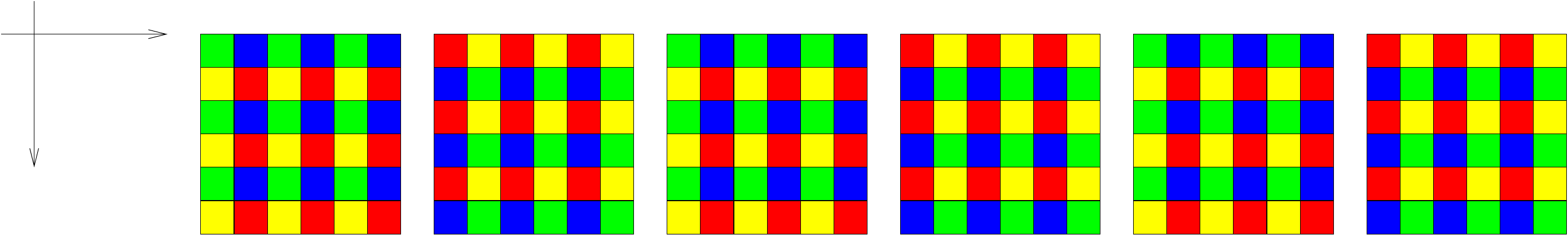}}%
    \put(0.03882119,0.02608182){\color[rgb]{0,0,0}\makebox(0,0)[lt]{\lineheight{1.25}\smash{\begin{tabular}[t]{l}$x$\end{tabular}}}}%
    \put(0.08132615,0.10259074){\color[rgb]{0,0,0}\makebox(0,0)[lt]{\lineheight{1.25}\smash{\begin{tabular}[t]{l}$y$\end{tabular}}}}%
  \end{picture}%
\endgroup%

    \caption{A slab-type coloring of a $6\times 6\times 6$ box,
    based on Example~\ref{example:slabtype}}
    \label{fig:slab}
\end{figure}

\begin{example}
\label{example:slabtype}
Here we give an algebraic description of a slab-type coloring.
Let $\phi: \ZZ \oplus \ZZ \oplus \ZZ \to \ZZ/(2) \oplus \ZZ/(2)$
be the additive group homomorphism
$\phi(x,y,z) = (y+z,x+z)$.
Associate with each element of $\ZZ/(2) \oplus \ZZ/(2)$ a color
and paint the cube $[x,x+1]\times [y,y+1] \times [z,z+1]$
with the color $\phi(x,y,z)$.
As in Figure~\ref{fig:slab},
$(0,0)$ is green, $(1,0)$ is yellow,
$(0,1)$ is blue and $(1,1)$ is red.
\end{example}


\smallskip
\begin{lemma}
\label{lemma:unique}
Let $\cR=[0,L]\times[0,M]\times[0,N]$ be a box with $\min\{L,M,N\}\geq 2$.
There is, up to permutation of the colors,
only one slab type coloring.
\end{lemma}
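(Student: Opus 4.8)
The plan is to prove local rigidity of the slab-type condition on every $2\times 2\times 2$ sub-box and then propagate it across the whole region. First I would restate the condition conveniently: writing $c$ for the coloring and indexing each cube by its lower corner $(x,y,z)\in\ZZ^3$, the slab-type condition is exactly that the four cubes of every axis-aligned $2\times 2$ square---in each of the three coordinate orientations---receive four distinct colors, since these squares are precisely the footprints of the three types of slab. In particular, two cubes adjacent along a coordinate axis always get distinct colors, because $\min\{L,M,N\}\ge 2$ guarantees that any such pair lies inside a common $2\times 2$ square contained in $\cR$.

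The heart of the argument is a rigidity statement for a single $2\times 2\times 2$ sub-box $[a,a+2]\times[b,b+2]\times[c,c+2]$. Its bottom face $z=c$ is a $2\times 2$ square, so its four cubes carry all four colors; I would use the color-permutation freedom once and for all to name $c(a,b,c)=A$, $c(a+1,b,c)=B$, $c(a,b+1,c)=C$, $c(a+1,b+1,c)=D$. The face $x=a$ (containing colors $A,C$ on its bottom edge) then forces $\{c(a,b,c+1),c(a,b+1,c+1)\}=\{B,D\}$, while the face $y=b$ forces $\{c(a,b,c+1),c(a+1,b,c+1)\}=\{C,D\}$. Intersecting gives $c(a,b,c+1)=D$, hence $c(a,b+1,c+1)=B$ and $c(a+1,b,c+1)=C$, and finally the top face $z=c+1$ forces $c(a+1,b+1,c+1)=A$. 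The upshot is that antipodal cubes of the sub-box share a color; equivalently, $c(p)=c(q)$ whenever $p-q\in\{\pm 1\}^3$. Any two cubes of $\cR$ related by such a vector are automatically antipodal in a common $2\times 2\times 2$ sub-box of $\cR$, again because $\min\{L,M,N\}\ge 2$.

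For the propagation step I would upgrade these antipodal relations to translation invariances. Writing each of $2e_1,2e_2,2e_3$ as a sum of two $\{\pm 1\}^3$ vectors through a suitable intermediate cube---whose existence inside $\cR$ is exactly what $\min\{L,M,N\}\ge 2$ provides---one gets that $c$ is invariant under $2e_1,2e_2,2e_3$, and it is invariant under $(1,1,1)$ directly. These four translations generate the subgroup $\ker\phi=\{(x,y,z):x\equiv y\equiv z \pmod 2\}$, so $c$ factors through the quotient $\ZZ^3/\ker\phi$, a four-element set on which $\phi$ descends to a bijection onto $\im\phi$. Since all four colors occur on any single face, the induced map $\ZZ^3/\ker\phi\to\{\text{colors}\}$ is surjective, hence bijective; composing with the inverse of $\phi$'s descent exhibits a color permutation $\psi$ with $c=\psi\circ\phi$. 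Thus, up to permutation of the colors, $c$ is the coloring of Example~\ref{example:slabtype}.

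I expect the genuine mathematical content to lie in the $2\times 2\times 2$ rigidity casework of the second paragraph; the rest is bookkeeping. The only delicate point is the boundary behavior: one must check that the auxiliary sub-boxes and the intermediate cubes used in the propagation always fit inside $\cR$, and this is precisely where the hypothesis $\min\{L,M,N\}\ge 2$ is indispensable. A secondary routine check is that the translation subgroup generated by $\{\pm 1\}^3$ is exactly $\ker\phi$ and has index four in $\ZZ^3$.
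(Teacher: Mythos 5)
Your proof is correct. Its core step coincides with the paper's: your intersection argument on the faces $x=a$ and $y=b$ of a $2\times2\times2$ sub-box (first constraining $c(a,b,c+1)$ to $\{B,D\}$, then to $\{C,D\}$, hence $=D$) is exactly the paper's deduction that the square $W$ in Figure~\ref{fig:color} must be blue or green by one vertical slab and then green by a second one. Where you genuinely diverge is in the propagation. The paper propagates the coloring cube by cube and leaves this step informal (``in a similar fashion, we rule out possibilities to propagate the painting''); note that propagation \emph{within} a single floor is not possible on its own, since the planar rainbow condition alone does not determine the pattern, so the inter-floor slabs are essential there too, a point the paper glosses over. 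You instead package the local rigidity as the invariance $c(p)=c(q)$ for $p-q\in\{\pm1\}^3$, upgrade it to invariance under $2e_1,2e_2,2e_3$ and $(1,1,1)$, and factor $c$ through $\ZZ^3/\ker\phi$, recovering the coloring of Example~\ref{example:slabtype} as $\psi\circ\phi$. This buys a complete and checkable global argument, and it identifies the unique coloring explicitly with the algebraic one rather than merely asserting uniqueness. The one point you assert rather than prove is that invariance under the generators yields constancy on cosets within the finite region; in a box this is routine (one diagonal step toward the target to fix parity, then coordinatewise $\pm2$ steps, all intermediate positions lying coordinatewise between source and target and hence in $\cR$), but since this coset-path argument is precisely what fails for general regions (cf.\ Remark~\ref{remark:unique}), it deserves the one line it takes to state. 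A last cosmetic remark: once two cubes of $\cR$ differ by a vector in $\{\pm1\}^3$, the $2\times2\times2$ box they span lies in $\cR$ automatically because $\cR$ is a box; the hypothesis $\min\{L,M,N\}\geq 2$ is needed only for the existence of the intermediate cubes in your translation step (and for such diagonal pairs to exist at all), exactly as you flag.
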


\begin{proof}
Refer to Figure~\ref{fig:color}.
Consider a slab which covers the yellow, red, $Z$ and $W$ squares. This means $W$ must be painted either blue or green. Now consider a slab which covers the blue, red, $Y$ and $W$ squares. The only possibility left is for $W$ to be painted green. Analogously, $X$ must be red, $Y$ must be yellow and $Z$ must be blue.
    
    In a similar fashion, we rule out possibilities to propagate the painting to the remaining part of $\cR$.
\end{proof}

\begin{figure}[ht]
        \centering
\begingroup%
  \makeatletter%
  \providecommand\color[2][]{%
    \errmessage{(Inkscape) Color is used for the text in Inkscape, but the package 'color.sty' is not loaded}%
    \renewcommand\color[2][]{}%
  }%
  \providecommand\transparent[1]{%
    \errmessage{(Inkscape) Transparency is used (non-zero) for the text in Inkscape, but the package 'transparent.sty' is not loaded}%
    \renewcommand\transparent[1]{}%
  }%
  \providecommand\rotatebox[2]{#2}%
  \newcommand*\fsize{\dimexpr\f@size pt\relax}%
  \newcommand*\lineheight[1]{\fontsize{\fsize}{#1\fsize}\selectfont}%
  \ifx\svgwidth\undefined%
    \setlength{\unitlength}{200bp}%
    \ifx\svgscale\undefined%
      \relax%
    \else%
      \setlength{\unitlength}{\unitlength * \real{\svgscale}}%
    \fi%
  \else%
    \setlength{\unitlength}{\svgwidth}%
  \fi%
  \global\let\svgwidth\undefined%
  \global\let\svgscale\undefined%
  \makeatother%
  \begin{picture}(1,0.435)%
    \lineheight{1}%
    \setlength\tabcolsep{0pt}%
    \put(0,0){\includegraphics[width=\unitlength,page=1]{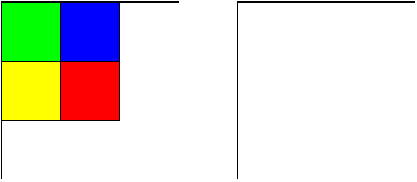}}%
    \put(0.64177691,0.35927693){\color[rgb]{0,0,0}\makebox(0,0)[lt]{\lineheight{1.25}\smash{\begin{tabular}[t]{l}$X$\end{tabular}}}}%
    \put(0.78355385,0.35927693){\color[rgb]{0,0,0}\makebox(0,0)[lt]{\lineheight{1.25}\smash{\begin{tabular}[t]{l}$Y$\end{tabular}}}}%
    \put(0.64177691,0.2175){\color[rgb]{0,0,0}\makebox(0,0)[lt]{\lineheight{1.25}\smash{\begin{tabular}[t]{l}$Z$\end{tabular}}}}%
    \put(0.78355385,0.2175){\color[rgb]{0,0,0}\makebox(0,0)[lt]{\lineheight{1.25}\smash{\begin{tabular}[t]{l}$W$\end{tabular}}}}%
  \end{picture}%
\endgroup%

        \caption{The top left corner of the first two floors of $\cR$}
        \label{fig:color}
\end{figure}

\begin{remark}
\label{remark:unique}
The uniqueness property in Lemma \ref{lemma:unique}
also holds for other kinds of regions and does not hold for some others.
We do not think a more precise statement would be useful here.

Given Lemma \ref{lemma:unique},
there is limited interest in considering more general colorings
if, as in the present paper, our focus is on boxes.
From now on, we will 
consider only the coloring in Example \ref{example:slabtype}.
\end{remark}

The second step of the process is to transform the region.
We first consider cylinders of the form
$\cR = \cD \times [0,N]$ and the $z$-view.
A pair of colors $(c_0,c_1)$ is \textit{good} (for this view)
if $c_0$ and $c_1$ are on a same diagonal on the first floor:
the good pairs are therefore red-green and blue-yellow.
The colors in a good pair are also called \textit{good};
the other colors are \textit{bad}.
Informally,
we keep all cubes (or squares) of one of the two good colors
and delete all cubes of bad colors.
We then inflate the remaining cubes so as to obtain cuboids
which fill in the space left by the deleted cubes.
Figure~\ref{fig:slabd} shows an example:
the initial region is as in Figure~\ref{fig:slab}
and the good pair is red-green.

\begin{figure}[ht]
    \centering
  \includegraphics[scale=0.25]{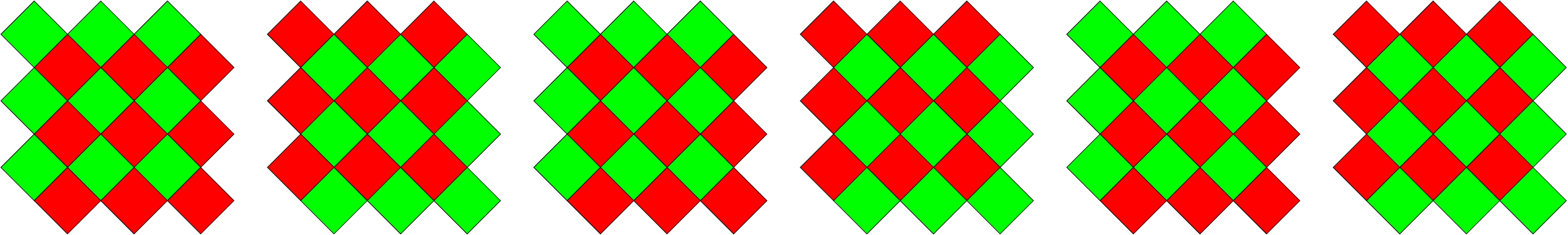}

    \caption{The region in Figure~\ref{fig:slab} is transformed
    by deleting the blue and yellow cubes and
    inflating the red and green ones}
    \label{fig:slabd}
\end{figure}

A more precise construction is in order.
For the red-green pair, the cube 
$[x,x+1]\times[y,y+1]\times[z,z+1]$ is good if $x+y$ is even.
In the notation of Example~\ref{example:slabtype},
the cube is good if
$\phi(x,y,z) \in \{(0,0),(1,1)\} \subset \ZZ/(2) \oplus \ZZ/(2)$.
If good, the above cube inflates to become the cuboid
$\tilde S \times [z,z+1]$ where $\tilde S \subset \RR^2$
is the square with vertices
$(x-\frac12,y+\frac12)$,
$(x+\frac12,y-\frac12)$,
$(x+\frac32,y+\frac12)$,
$(x+\frac12,y+\frac32)$.
The region $\tilde R$ is the union of the resulting inflated cuboids,
as in Figure~\ref{fig:slabd}.
The new cuboids keep the old colors,
so that $\tilde R$ is a cubiculated region.
Strictly speaking, we might feel the need to rotate and rescale,
but we usually refrain from doing so as it only complicates the description.


We are now left with the task of transforming the tilings.
We first consider slab tilings; let $\bt$ be a tiling of $\cR$.
Each slab $P$ of $\bt$ covers two preserved cubes and two deleted cubes.
Let $S_{0}$ and $S_{1}$ be the two preserved cubes,
which are transformed into $\tilde S_{0}$ and $\tilde S_{1}$, respectively.
We say that $P$ is taken to the domino $\tilde P$,
the union of $\tilde S_{0}$ and $\tilde S_{1}$.
See Figures~\ref{fig:cob} and \ref{fig:cobd} for an example.
In the second row of Figure~\ref{fig:cobd},
we show the tiling rotated clockwise by $45^{\circ}$;
in this example, we have $\Tw = +2$.

\begin{figure}[ht]
    \centering
 \includegraphics[scale=0.25]{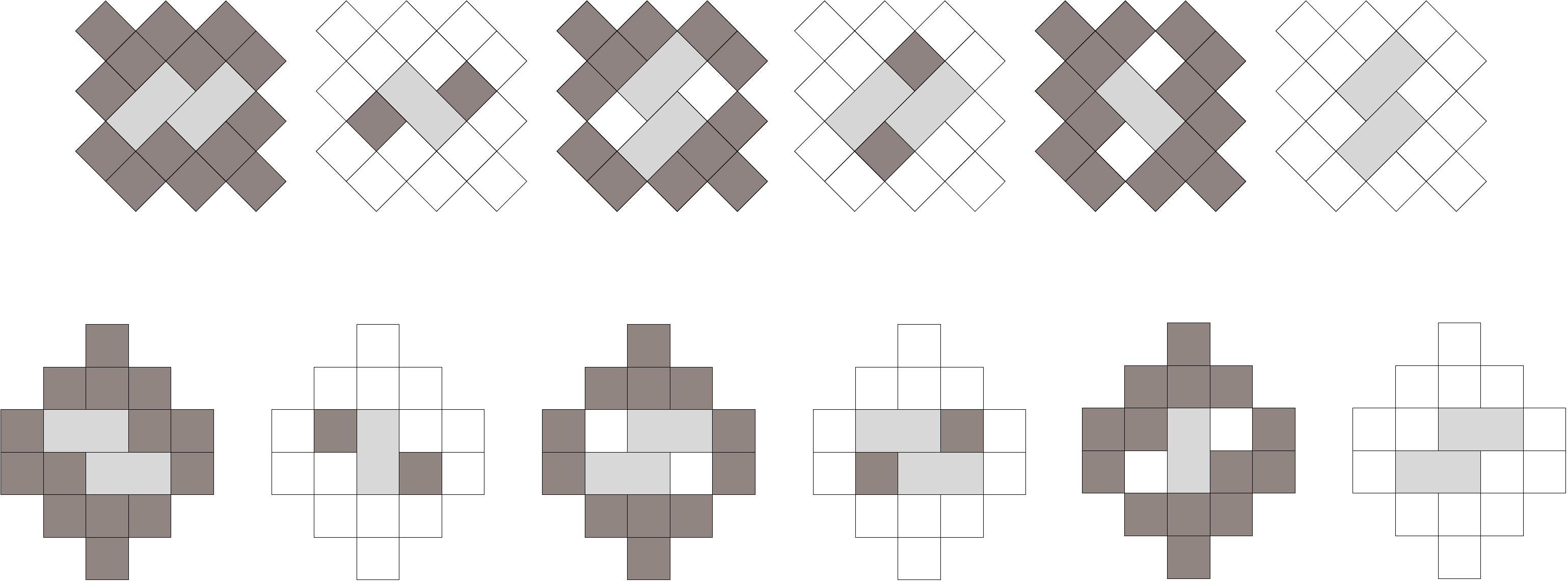}

     \caption{The domino tiling obtained
from the slab tiling in Figure~\ref{fig:cob} or
the mixed tiling in Figure~\ref{fig:mixed}}
    \label{fig:cobd}
\end{figure}


For mixed tilings the construction is similar.
Horizontal slabs are transformed into horizontal dominoes, as above.
Vertical dominoes consist of two cubes which are
either both preserved or both removed.
In the first case, the domino in transformed into a vertical domino;
in the second case the domino is deleted;
see Figures~\ref{fig:mixed} and \ref{fig:cobd}.

Now we consider $\bt$ a slab tiling of a box.
Until this moment, we have only looked at our tiling $\bt$ of $\cR$
from a particular position of the axes,
as stated at the beginning of the section.
This position of the axes is exactly that of an observer
looking from the positive part of the $z$-axis.
We say we are looking at the \textit{$z$-view} $\bt_{z}$ of $\bt$.
What we can actually do is look at the region $\cR$
from the positions of the positive parts of the $x$ and $y$ axes,
yielding the views $\bt_{x}$ and $\bt_{y}$, respectively.
Strictly speaking, $\bt_{x}$, $\bt_{y}$ and $\bt_{z}$ are
three different views of the same tiling (as sets of slabs).
What does change though is the floor diagram.

\begin{figure}[ht]
    \centering
  \def\svgwidth{12cm}
\begingroup%
  \makeatletter%
  \providecommand\color[2][]{%
    \errmessage{(Inkscape) Color is used for the text in Inkscape, but the package 'color.sty' is not loaded}%
    \renewcommand\color[2][]{}%
  }%
  \providecommand\transparent[1]{%
    \errmessage{(Inkscape) Transparency is used (non-zero) for the text in Inkscape, but the package 'transparent.sty' is not loaded}%
    \renewcommand\transparent[1]{}%
  }%
  \providecommand\rotatebox[2]{#2}%
  \newcommand*\fsize{\dimexpr\f@size pt\relax}%
  \newcommand*\lineheight[1]{\fontsize{\fsize}{#1\fsize}\selectfont}%
  \ifx\svgwidth\undefined%
    \setlength{\unitlength}{1391bp}%
    \ifx\svgscale\undefined%
      \relax%
    \else%
      \setlength{\unitlength}{\unitlength * \real{\svgscale}}%
    \fi%
  \else%
    \setlength{\unitlength}{\svgwidth}%
  \fi%
  \global\let\svgwidth\undefined%
  \global\let\svgscale\undefined%
  \makeatother%
  \begin{picture}(1,0.57225018)%
    \lineheight{1}%
    \setlength\tabcolsep{0pt}%
    \put(0,0){\includegraphics[width=\unitlength,page=1]{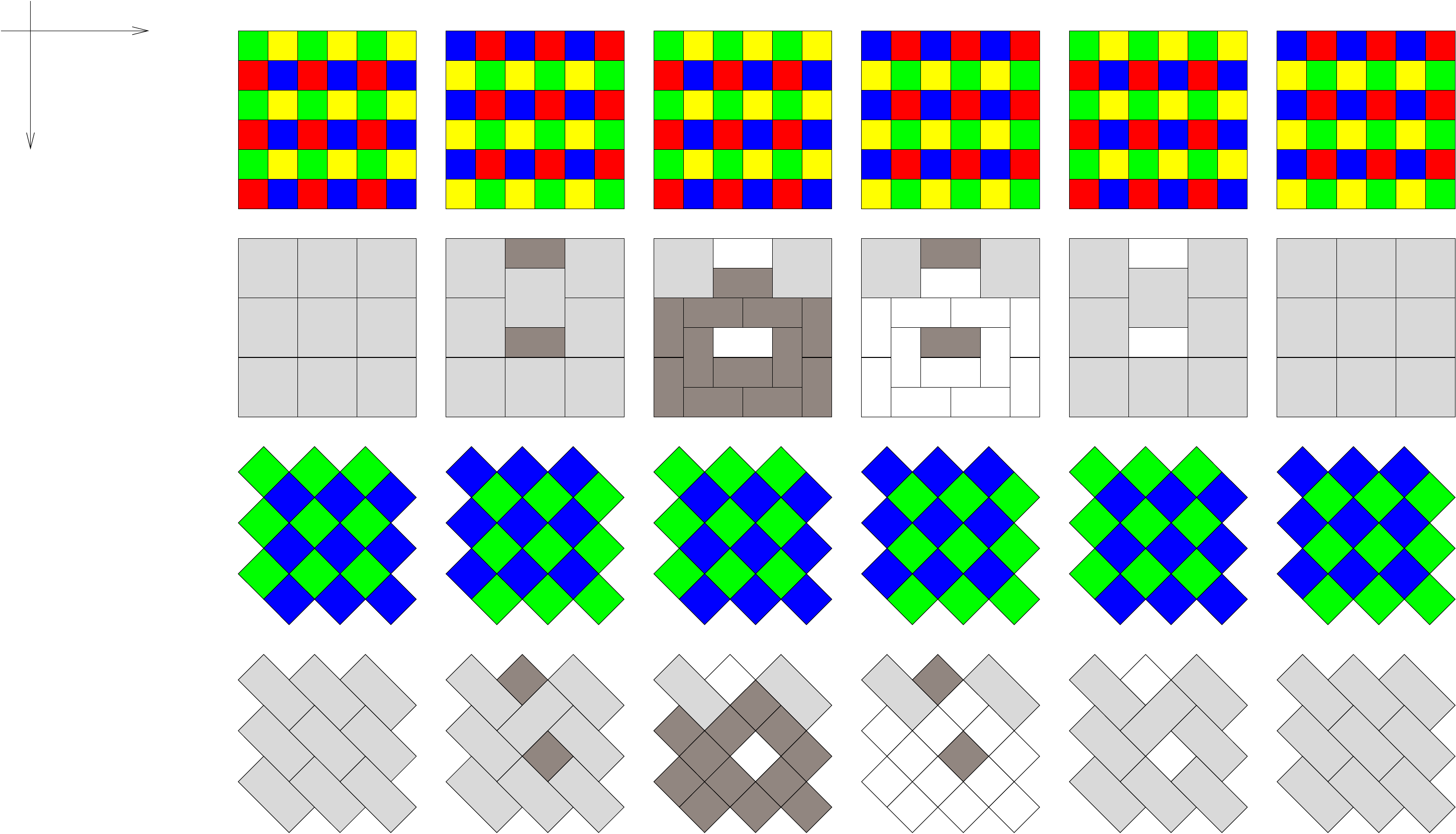}}%
    \put(0.03723838,0.45329007){\color[rgb]{0,0,0}\makebox(0,0)[lt]{\lineheight{1.25}\smash{\begin{tabular}[t]{l}$z$\end{tabular}}}}%
    \put(0.07801033,0.52667957){\color[rgb]{0,0,0}\makebox(0,0)[lt]{\lineheight{1.25}\smash{\begin{tabular}[t]{l}$x$\end{tabular}}}}%
  \end{picture}%
\endgroup%

    \caption{The figures corresponding to the $y$-view,
with the blue-green good pair; $\Tw(\tilde\bt) = 0$}
    \label{fig:verdeazul}
\end{figure}

\begin{figure}[ht]
    \centering
 \def\svgwidth{12cm}
\begingroup%
  \makeatletter%
  \providecommand\color[2][]{%
    \errmessage{(Inkscape) Color is used for the text in Inkscape, but the package 'color.sty' is not loaded}%
    \renewcommand\color[2][]{}%
  }%
  \providecommand\transparent[1]{%
    \errmessage{(Inkscape) Transparency is used (non-zero) for the text in Inkscape, but the package 'transparent.sty' is not loaded}%
    \renewcommand\transparent[1]{}%
  }%
  \providecommand\rotatebox[2]{#2}%
  \newcommand*\fsize{\dimexpr\f@size pt\relax}%
  \newcommand*\lineheight[1]{\fontsize{\fsize}{#1\fsize}\selectfont}%
  \ifx\svgwidth\undefined%
    \setlength{\unitlength}{1391bp}%
    \ifx\svgscale\undefined%
      \relax%
    \else%
      \setlength{\unitlength}{\unitlength * \real{\svgscale}}%
    \fi%
  \else%
    \setlength{\unitlength}{\svgwidth}%
  \fi%
  \global\let\svgwidth\undefined%
  \global\let\svgscale\undefined%
  \makeatother%
  \begin{picture}(1,0.57225018)%
    \lineheight{1}%
    \setlength\tabcolsep{0pt}%
    \put(0,0){\includegraphics[width=\unitlength,page=1]{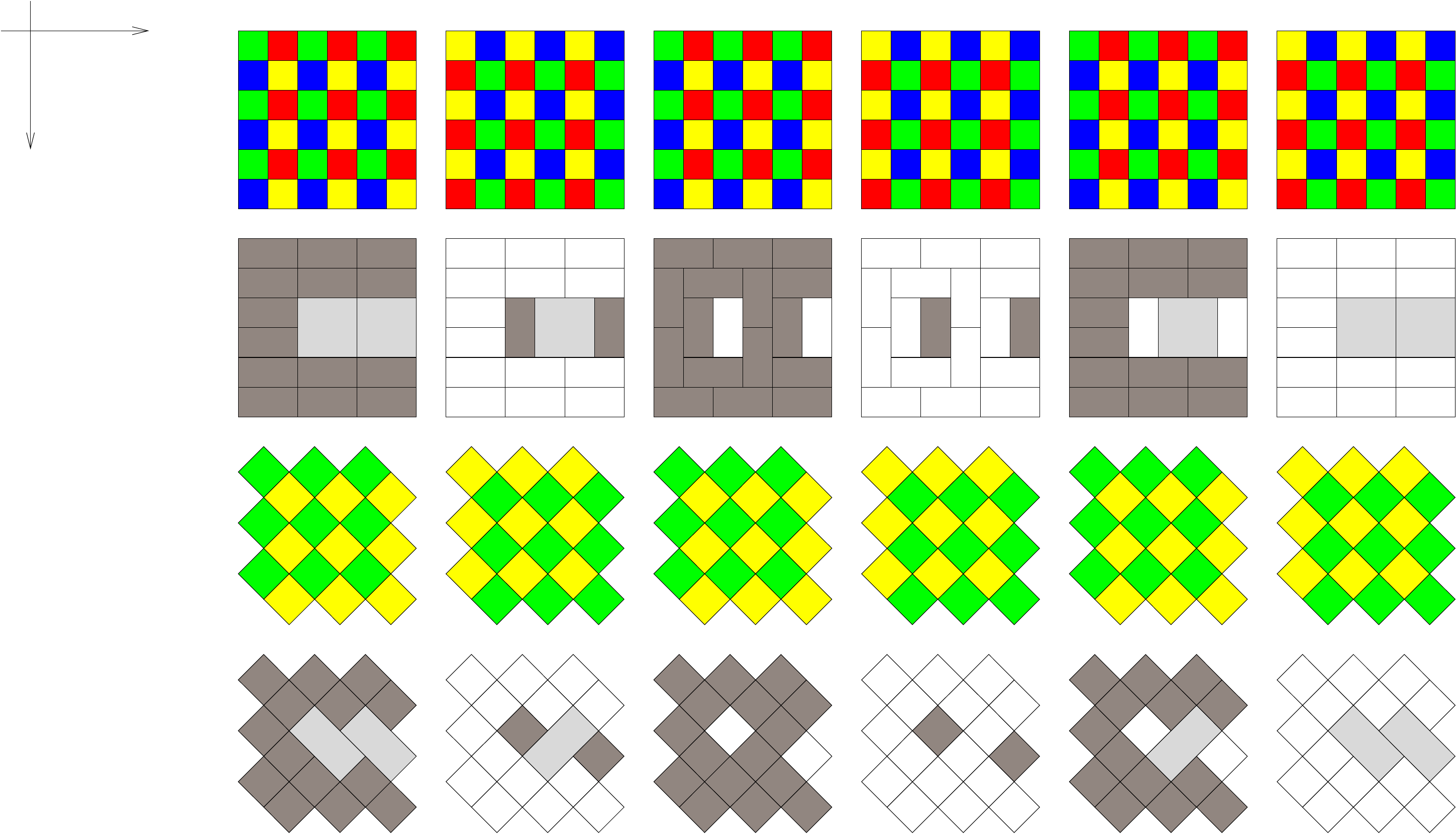}}%
    \put(0.03723838,0.45329007){\color[rgb]{0,0,0}\makebox(0,0)[lt]{\lineheight{1.25}\smash{\begin{tabular}[t]{l}$y$\end{tabular}}}}%
    \put(0.07801033,0.52667957){\color[rgb]{0,0,0}\makebox(0,0)[lt]{\lineheight{1.25}\smash{\begin{tabular}[t]{l}$z$\end{tabular}}}}%
  \end{picture}%
\endgroup%

    \caption{The figures corresponding to the $x$-view,
with the green-yellow good pair; $\Tw(\tilde\bt) = 0$}
    \label{fig:verdeamarelo}
\end{figure}

For each view there are two good pairs of colors.
For the $y$-view, the good pairs are green-blue and red-yellow;
for the $x$-view, red-blue and green-yellow.
In general, a pair of colors $\kappa$ defines an axis
and an axis defines two complementary pairs of colors, 
$\kappa$ and $\bar\kappa$.

Given a view, a good pair of colors and a tiling of a box,
we perform the above construction.
In Figures \ref{fig:verdeazul} and \ref{fig:verdeamarelo}
we continue the example of the slab tiling
of the $6\times6\times6$ box given in Figure~\ref{fig:cob}.
Notice that the resulting regions and
domino tilings $\tilde\bt$ are quite different.
When necessary, we write $\tilde\cR_{\kappa}$ and $\tilde\bt_{\kappa}$
to indicate the pair of colors.


We are ready to state the definitions of the twist.

\begin{figure}[t]
    \centering
    \includegraphics[scale=0.2]{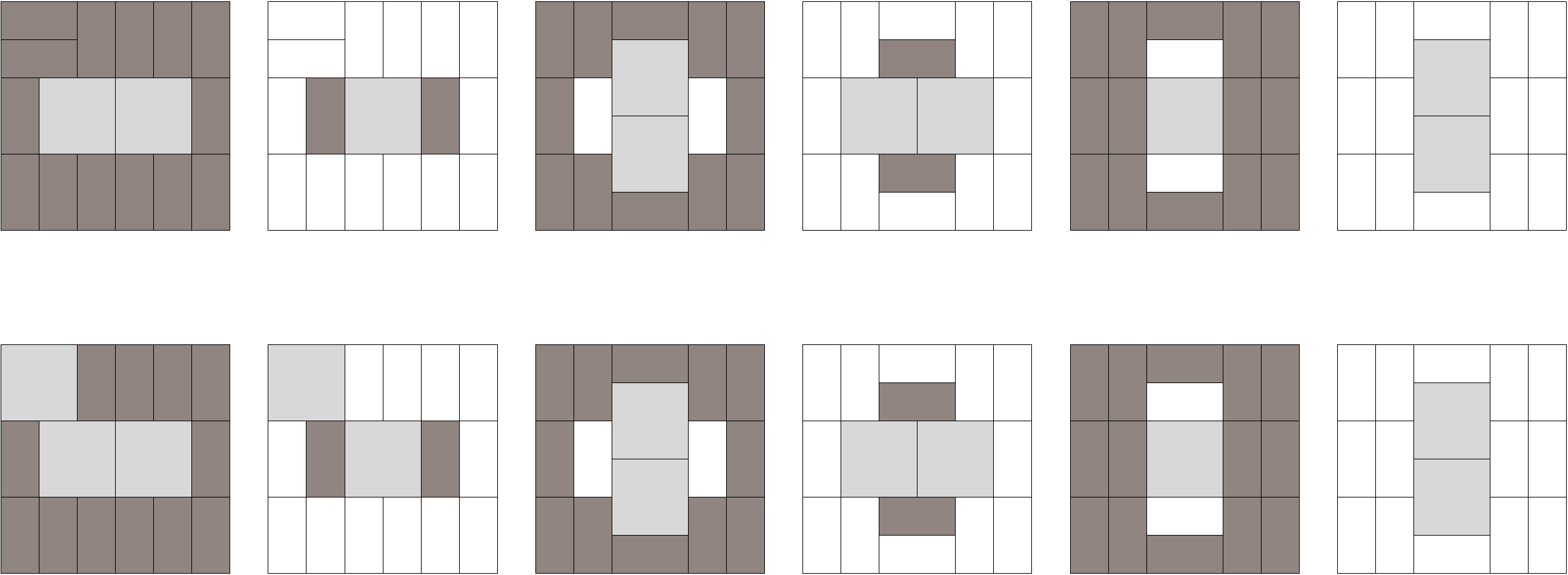}
    \caption{Two possibilities of flipping two slabs
    on the top left corner of the first floor of Figure~\ref{fig:cob};
    under the transformation,
    the first possibility yields
    the same domino tiling as in Figure~\ref{fig:cobd}}
    \label{fig:cob23}
\end{figure}

\begin{figure}[t]
    \centering
   \includegraphics[scale=0.2]{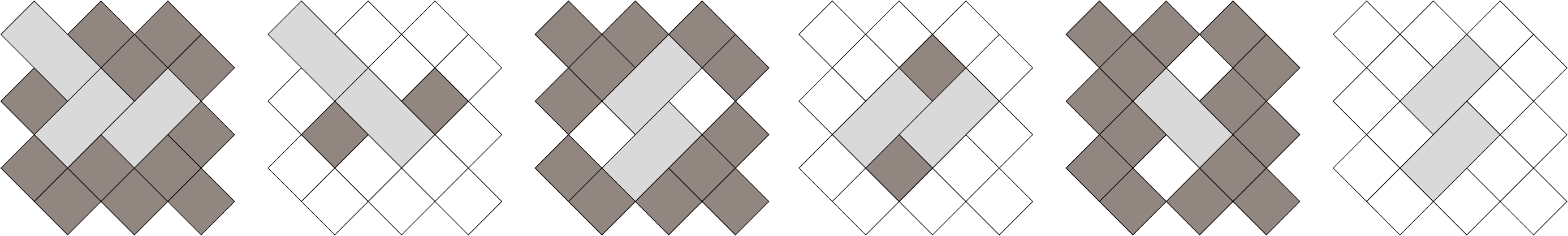}

    \caption{The domino tiling from the second possibility in Figure~\ref{fig:cob23} after the transformation}
    \label{fig:cobd2}
\end{figure}

\medskip  

\begin{defin}[Mixed twist]
Let $\cR$ be a $z$-cylinder and $\kappa$ be a good pair of colors.
Let $\bt$ be a mixed tiling of $\cR$.
Consider the transformed region $\tilde\cR_\kappa$
and the corresponding domino tiling $\tilde{\bt}_\kappa$.
The \textit{twist} of $\bt$ is
\[\Tw_\kappa(\bt)=\Tw(\tilde{\bt}_\kappa)\in\mathbb{Z}.\]
\end{defin}

\medskip  

\begin{defin}[Triple twist]
Let $\cR$ be a box and let $\bt$ be a slab tiling of $\cR$.
Let $\kappa_x, \kappa_y, \kappa_z$ be good pairs of colors
for the $x$, $y$ and $z$ views, respectively.
Consider the transformed domino tilings
$\tilde \bt_{\kappa_x}$, $\tilde \bt_{\kappa_y}$ and $\tilde \bt_{\kappa_z}$.
Let $\Tw_\kappa(\bt)=\Tw(\tilde \bt_\kappa)$.
The \textit{triple twist} of $\bt$ is
\[ \TTw_{\kappa_x,\kappa_y,\kappa_z}(\bt) = 
(\Tw_{\kappa_x}(\bt),\Tw_{\kappa_y}(\bt),\Tw_{\kappa_z}(\bt))\in \mathbb{Z}^3. \]
\end{defin}

\bigskip


\section{Properties of the Triple Twist}
\label{section:properties}

The twist for domino tilings has several simple properties and 
some of them are inherited by the triple twist.

\medskip

Let $\bt$ be a slab or mixed tiling of a cylinder $\cR$.
Fix a good pair of colors $\kappa$.
When performing a flip in $\bt$,
one of two things happens with the dominoes
in the corresponding domino tiling $\tilde\bt$:
either a domino flip is performed or nothing changes at all.
Since the domino twist is flip invariant,
this means that the several $\Tw_{\kappa}$ defined above
are invariants under flips.

The complement $\bar\kappa$ of a good pair of colors $\kappa$ is also good.
One of the first questions which arises
when defining the triple twist
is regarding the choice of $\bar\kappa$ instead of $\kappa$.
For example, the calculation of the twist in Figure~\ref{fig:cobd}
depended on the choice of the green-red pair in Figure~\ref{fig:slab}.
We could have chosen instead the pair blue-yellow.
This other choice would even yield a different domino tileable region,
but does not actually provide any new information,
which is the content of Lemma \ref{independence}.

\smallskip
\begin{lemma}
\label{independence}
Let $\cR$ be a cylinder.
Let $\kappa$ and $\bar\kappa$ be complementary good pairs of colors.
Let $\bt$ be a mixed tiling of $\cR$.
Then, $\Tw_\kappa(\bt)= -\Tw_{\bar\kappa}(\bt)$.
\end{lemma}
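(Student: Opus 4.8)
The plan is to reduce the identity to the sign-change of the domino twist under reflection recorded as the fourth property in Section~\ref{section:domino}, since that is the only source of a minus sign available. Because $\Tw_\kappa(\bt)=\Tw(\tilde\bt_\kappa)$ and $\Tw_{\bar\kappa}(\bt)=\Tw(\tilde\bt_{\bar\kappa})$, it suffices to compare the two domino tilings produced from the \emph{same} mixed tiling $\bt$. First I would make the combinatorial relationship between the two transforms explicit. With the coloring of Example~\ref{example:slabtype}, the good pair $\kappa$ (say red--green) retains exactly the cubes with $x+y$ even and $\bar\kappa$ (blue--yellow) the cubes with $x+y$ odd; inside every horizontal slab the retained cubes of $\kappa$ and of $\bar\kappa$ occupy the two diagonals of the $2\times2$ block, so $\tilde\bt_\kappa$ and $\tilde\bt_{\bar\kappa}$ assign to that slab two horizontal dominoes sharing a center and differing by a quarter turn. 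I would record this slab by slab, noting in addition that a vertical domino of a mixed tiling has both its cubes of the same $x+y$-parity and is therefore kept by exactly one of the two pairs.

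The second step is to recognize the switch $\kappa\leftrightarrow\bar\kappa$ as a reflection. At the level of the coloring, reflecting the cube indices across an integer coordinate plane such as $x=0$ flips the parity of $x+y$ and hence interchanges the $\kappa$-retained and $\bar\kappa$-retained cubes; at the level of the inflated regions it carries the diamonds used for $\kappa$ onto those used for $\bar\kappa$ and each $\kappa$-domino onto the corresponding $\bar\kappa$-domino. Such a reflection is orientation reversing, so the fourth property supplies precisely the minus sign. When $\cR$ is a box (the case we ultimately need) one may take the reflection across the central plane perpendicular to a suitable even side: it fixes the region and sends $\tilde\bt_\kappa$ to $\tilde\bt_{\bar\kappa}$, giving $\Tw(\tilde\bt_\kappa)=-\Tw(\tilde\bt_{\bar\kappa})$ at once.

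The main obstacle is that for a general cylinder no global reflection fixes the base disk $\cD$: interchanging the good pairs requires changing the parity of $x+y$, which a reflection can realize only by simultaneously moving $\cD$, so the reflection carries $\tilde\bt_\kappa$ to the $\bar\kappa$-transform of a reflected tiling rather than to $\tilde\bt_{\bar\kappa}$ itself. I would therefore run the sign argument directly on the effect sum, exactly as the fourth property is established. Writing both twists in a common $45^\circ$-rotated frame (the even and odd diamond-center lattices are translates, so one rotation serves both), the horizontal dominoes parallel to $e_2$ that play the role of $d_0$ come from the odd-position slabs for $\kappa$ and from the even-position slabs for $\bar\kappa$, while for a mixed tiling the vertical dominoes that serve as $d_1$ also split, by parity, into complementary sets. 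One then matches each contributing pair $(d_0,d_1)$ in the computation of $\Tw(\tilde\bt_\kappa)$ with its quarter-turned counterpart in $\Tw(\tilde\bt_{\bar\kappa})$ and checks that the four sign parameters of the effect conspire to reverse it, so the two sums are negatives of one another.

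This last verification is the part requiring care: because both the set of admissible $d_0$ and the set of admissible $d_1$ change parity between the two computations, the bijection on effect-contributing pairs is not a rigid motion but the slab-by-slab quarter turn, and one must track how the left/right, above/below, distance-parity and floor-position parameters behave under it. I expect the bookkeeping to be entirely analogous to the casework that proves the reflection property for dominoes, and to conclude $\Tw_\kappa(\bt)=-\Tw_{\bar\kappa}(\bt)\in\ZZ$.
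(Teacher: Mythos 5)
Your reduction to the reflection property does not go through, and the fallback effect-sum matching has a genuine gap. First, the box shortcut in your second paragraph is circular: a reflection $r$ across a central lattice plane fixes the \emph{region} but not the \emph{tiling}, so even for a box it carries $\tilde\bt_\kappa$ to the $\bar\kappa$-transform of the reflected tiling $r(\bt)$ --- exactly the obstacle you concede for general cylinders. What the reflection property yields is $\Tw_{\bar\kappa}(r(\bt))=-\Tw_\kappa(\bt)$; to deduce the lemma from this you would additionally need $\Tw_{\bar\kappa}(r(\bt))=\Tw_{\bar\kappa}(\bt)$, i.e.\ reflection invariance of the mixed twist, which is item~1 of Proposition~\ref{prop:mixedsymmetry} --- and the paper proves that proposition \emph{from} Lemma~\ref{independence}, so this route is circular.

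The direct argument of your third paragraph then fails at the matching step. In the $z$-view, the dominoes eligible as $d_1$ in $\Tw(\tilde\bt_\kappa)$ are exactly the vertical dominoes of $\bt$ both of whose cubes are $\kappa$-good, and these are precisely the ones \emph{deleted} in $\tilde\bt_{\bar\kappa}$ (and symmetrically); likewise a slab whose $\kappa$-domino is parallel to $e_2$ has its $\bar\kappa$-domino perpendicular to $e_2$, so the slabs supplying the $d_0$'s in the two computations are complementary classes as well. Consequently the contributing pairs $(d_0,d_1)$ for the two twists form disjoint families: the ``quarter-turned counterpart'' of a contributing pair is simply not a pair of dominoes of the other transform, so there is no bijection to run the sign bookkeeping on --- unlike the reflection casework for dominoes, where the reflection does biject effect pairs. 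The paper's proof avoids this by a different device: rotate both transformed tilings by $45^\circ$ and compute both twists from the $x$-axis view (legitimate by rotation invariance of the twist on cylinders). In that view every domino coming from a vertical domino of $\bt$ becomes horizontal and contributes no effect, so all effects arise from pairs of horizontal slabs on different floors; classifying such a pair by the overlap of their projections (one, two, or four squares) shows the one- and four-square cases contribute nothing to either twist, while each two-square overlap contributes $+1$ to one transform and $-1$ to the other, giving the sign flip globally rather than pair-by-pair in your sense. Your first paragraph's set-up (quarter-turned dominoes within a slab, parity dichotomy for vertical dominoes) is correct, but without the view-switching step or some substitute that neutralizes the vertical dominoes, the proof is incomplete.
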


\begin{proof}
Assume that the two good pairs are $\kappa$ for red-green
and $\bar\kappa$ for yellow-blue.
Throughout the proof, when talking about the overlap of two pieces,
we refer to the overlap of their projections onto the $xy$ plane.
Firstly, we take a look at how two horizontal slabs
in different floors overlap.
There are three possibilities:
they can overlap in either one, two or four squares.
As an illustrative example,
we show these three cases in the mixed tiling
of the $4\times 4\times 4$ box in Figure~\ref{fig:411}.

\begin{figure}[ht]
    \centering
  \includegraphics[scale=0.4]{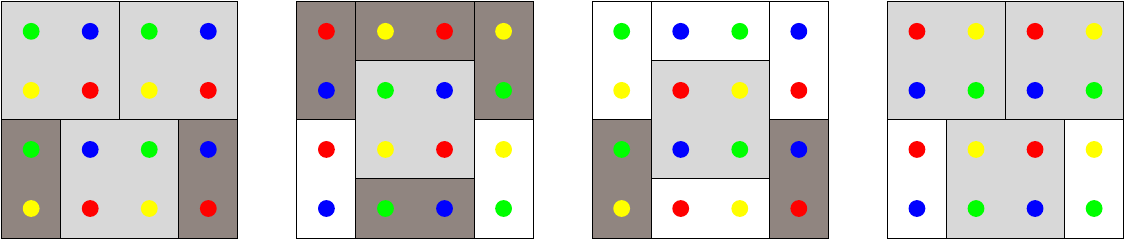}

    \caption{An example of a mixed tiling;
    the top left cube on the first floor is $[0,1]^3$,
    which is green}
    \label{fig:411}
\end{figure}

\begin{figure}[ht]
    \centering
  \includegraphics[scale=0.3]{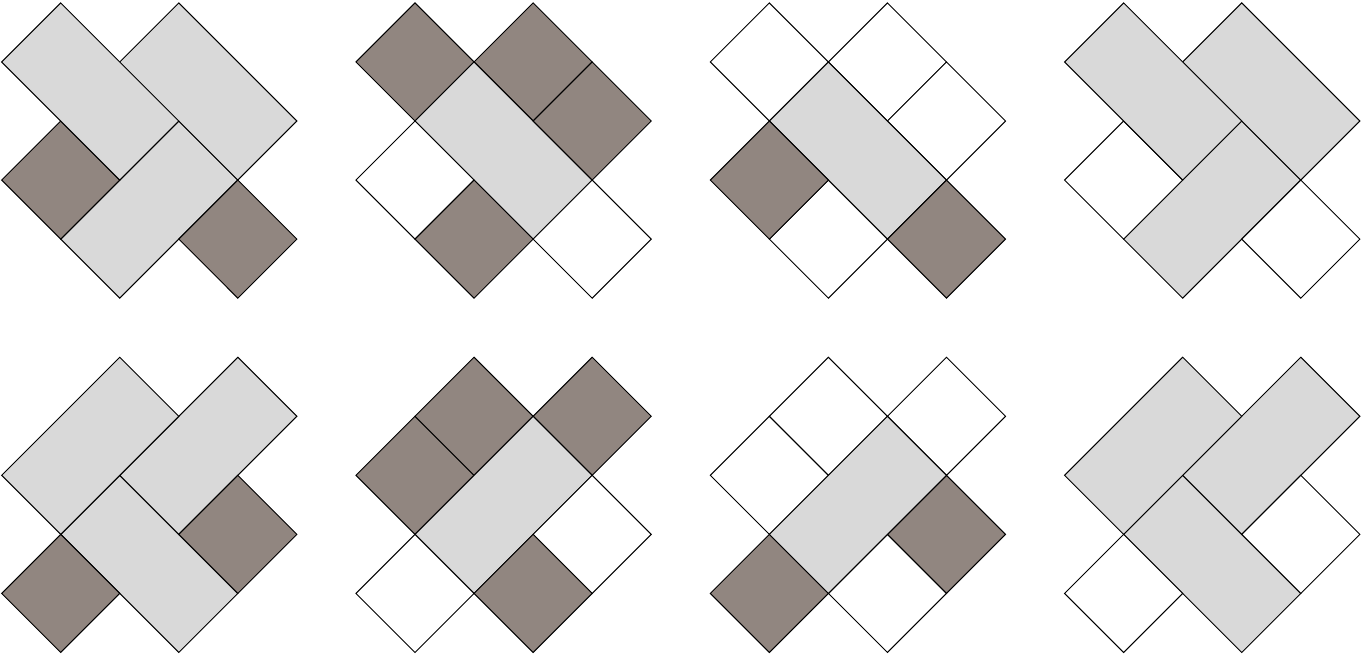}

    \caption{First tiling is red-green and second tiling is blue-yellow}
    \label{fig:412}
\end{figure}
    
We transform the mixed tiling $\bt$ into two domino tilings
$\tilde\bt_{\kappa}$ and $\tilde\bt_{\bar\kappa}$.
In Figure~\ref{fig:412} we show these two tilings
for the example in Figure~\ref{fig:411}.
We will calculate the two twists side by side, but in order to do so,
we will rotate both regions by $45$ degrees as in Figure~\ref{fig:415}
and then look at them from the $x$-axis view,
as depicted in Figure~\ref{fig:413}.
By doing so, dominoes which were vertical in Figure~\ref{fig:411}
(and thus vertical in Figure~\ref{fig:412})
become horizontal in Figure~\ref{fig:415}
and hence will not affect the calculation of the twists.
    
In Figure~\ref{fig:414} we look at the relative position
of a pair of dominoes which comes from a pair of overlapping slabs
(on different floors) in $\bt$
after performing the sequence of transformations in the above paragraph.
We do not require the pair of slabs to be on consecutive floors.
    \begin{figure}[ht]
        \centering
  \includegraphics[scale=0.3]{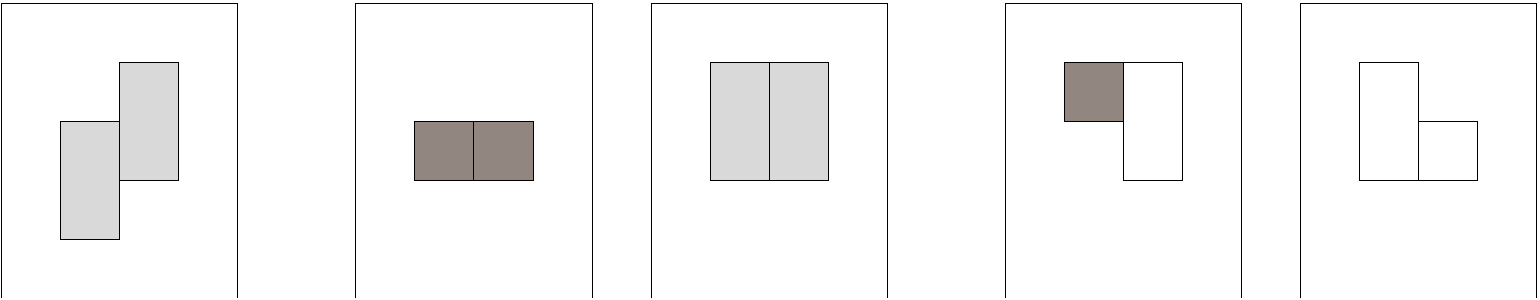}

        \caption{We have, in order:
one instance of case 1, two instances of case 2 and two instances of case 3}
        \label{fig:414}
    \end{figure}

    \begin{enumerate}
        \item If a pair of slabs overlaps in one square,
the corresponding dominoes are parallel in both
the green-red and blue-yellow tilings.
In one of the tilings the dominoes will both be vertical
with no overlap and in the other tiling both
will be horizontal with an overlap of one square.
        \item If a pair of slabs overlaps in four squares,
the corresponding dominoes will be parallel in both
the green-red and blue-yellow tilings,
but in the two cases they will be horizontal
and have an overlap of two squares.
        \item If a pair of slabs overlaps in two squares,
one of the corresponding dominoes will be vertical and the other horizontal,
overlapping on one square.
But in the green-red tiling the contribution to the twist
will be $+1$ and on the blue-yellow tiling the contribution will be $-1$
(or vice-versa).
    \end{enumerate}

Therefore, summing the contributions of the twists,
which are only nontrivial in case 3,
we see that, indeed, the statement holds.
\end{proof}

We now consider the effect of spatial symmetries
on the twist of mixed tilings.
The reader should compare the next result with the corresponding
simpler situation for twists of domino tilings.

\smallskip
\begin{prop}
\label{prop:mixedsymmetry}
Consider the box
$\cR = [-L,L] \times [-M,M] \times [-N,N]$,
where $L, M, N$ are positive integers.
Let $\bt$ be a mixed tiling of $\cR$,
with vertical dominoes in the $z$ direction.
Let $\kappa$ be a good pair of colors for the $z$-view.
\begin{enumerate}
\item{Let $r_x$, $r_y$ and $r_z$ denote the reflections
about the planes $x=0$, $y=0$ and $z=0$, respectively.
We have
\[
\Tw_\kappa(r_x(\bt)) = \Tw_\kappa(\bt), \quad
\Tw_\kappa(r_y(\bt)) = \Tw_\kappa(\bt), \quad
\Tw_\kappa(r_z(\bt)) = -\Tw_\kappa(\bt). \]}
\item{Assume that $L = M$ and
let $\rho$ denote a $90^{\circ}$ rotation around the $z$ axis.
We have $\Tw(\rho(\bt)) = -\Tw(\bt)$.}
\end{enumerate}
\end{prop}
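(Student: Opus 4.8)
The plan is to reduce everything to the known behaviour of the domino twist under reflections and rotations, using the identity $\Tw_\kappa(\bt) = \Tw(\tilde\bt_\kappa)$ together with the way each spatial symmetry permutes the four colors. The key bookkeeping is that a symmetry $g \in \{r_x, r_y, r_z, \rho\}$ of the box $\cR$ carries the fixed coloring to a permuted coloring, and hence sends the good pair $\kappa$ (for the $z$-view) either to itself or to its complement $\bar\kappa$. First I would record these permutations from the explicit formula $\phi(x,y,z) = (y+z,x+z)$ of Example~\ref{example:slabtype}: writing a color as $(a,b)\in\ZZ/(2)\oplus\ZZ/(2)$, one checks that $r_z$ acts by $(a,b)\mapsto(a+1,b+1)$ (so green$\leftrightarrow$red, yellow$\leftrightarrow$blue), $r_x$ by $(a,b)\mapsto(a,b+1)$, $r_y$ by $(a,b)\mapsto(a+1,b)$, and the $90^\circ$ rotation $\rho$ by $(a,b)\mapsto(b,a+1)$, a $4$-cycle through green, blue, red, yellow. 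In particular $r_z$ fixes the pair $\kappa=\{\text{green},\text{red}\}$ setwise, while $r_x$, $r_y$ and $\rho$ all send $\kappa$ to $\bar\kappa=\{\text{blue},\text{yellow}\}$.

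The second step is to show that the transformation of Section~\ref{section:coloring} is equivariant: since $g$ permutes colors and sends good pairs to good pairs, deleting the bad cubes of $g(\bt)$ (with respect to $\kappa$) and inflating the good ones is the same as first performing the transformation on $\bt$ with the pair $g^{-1}(\kappa)$ (the preimage under the induced color permutation) and then applying $g$. That is,
\[ \widetilde{g(\bt)}_\kappa = g\bigl(\tilde\bt_{g^{-1}(\kappa)}\bigr), \]
where on the right $g$ acts on the transformed (inflated) cylinder. Granting this, each assertion is immediate. For $r_z$ we have $g^{-1}(\kappa)=\kappa$, so $\widetilde{r_z(\bt)}_\kappa = r_z(\tilde\bt_\kappa)$, and since the domino twist changes sign under reflection about $z=0$ we get $\Tw_\kappa(r_z(\bt)) = -\Tw_\kappa(\bt)$. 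For $r_x$ and $r_y$ we have $g^{-1}(\kappa)=\bar\kappa$, so $\widetilde{g(\bt)}_\kappa = g(\tilde\bt_{\bar\kappa})$; using the sign change of the domino twist under the corresponding reflection and then Lemma~\ref{independence} (which gives $\Tw(\tilde\bt_{\bar\kappa}) = \Tw_{\bar\kappa}(\bt) = -\Tw_\kappa(\bt)$) the two sign flips cancel and $\Tw_\kappa(g(\bt)) = \Tw_\kappa(\bt)$. Finally, for the rotation (where $L=M$ guarantees $\rho(\cR)=\cR$ and $\rho(\tilde\cR_{\bar\kappa})=\tilde\cR_\kappa$) we have $\widetilde{\rho(\bt)}_\kappa = \rho(\tilde\bt_{\bar\kappa})$; the domino twist is rotation invariant on cylinders, so $\Tw_\kappa(\rho(\bt)) = \Tw(\tilde\bt_{\bar\kappa}) = -\Tw_\kappa(\bt)$, which is the content of part (2) (read with $\Tw = \Tw_\kappa$ throughout).

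The main obstacle is the equivariance identity of the second step. The color permutation is a purely arithmetic check, but one must verify carefully that the geometric operations of deletion and inflation --- which replace good cubes by the diamond-shaped cuboids $\tilde S\times[z,z+1]$ --- commute with $g$ up to the rotations and rescalings that we routinely suppress, and that pieces are carried correctly (horizontal slabs to horizontal dominoes, vertical dominoes either to vertical dominoes or deleted). One must also confirm that the transformed regions $\tilde\cR_\kappa$ and $\tilde\cR_{\bar\kappa}$ are genuine cylinders, so that the reflection and rotation properties of the domino twist quoted in Section~\ref{section:domino} actually apply; this is exactly where the box hypothesis, and $L=M$ for the rotation, enter.
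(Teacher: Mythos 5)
Your proposal is correct and follows essentially the same route as the paper: the paper's proof likewise tracks how each symmetry permutes the color classes (red--green fixed by $r_z$, swapped with blue--yellow by $r_x$, $r_y$ and $\rho$), uses the implicit equivariance $\widetilde{g(\bt)}_\kappa = g(\tilde\bt_{g^{-1}(\kappa)})$ to identify the transformed tilings, and then combines the reflection/rotation properties of the domino twist with Lemma~\ref{independence} in exactly the chain of equalities you give. The only difference is presentational: you derive the color permutations explicitly from $\phi(x,y,z)=(y+z,x+z)$ and flag the equivariance step as needing verification, whereas the paper simply asserts both.
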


\begin{proof}
We first consider case (item) 1 for $r_x$.
Let $\kappa$ and $\bar\kappa$ be the red-green and blue-yellow pairs,
respectively.
Let $\bt_0$ and $\bt_1 = r_x(\bt_0)$ be mixed tilings.
Let $\tilde\bt_{i,\kappa}$ and $\tilde\bt_{i,\bar\kappa}$
be the transformed tilings obtained from $\bt_i$ under each good pair.
The set of red-green cubes is taken by $r_x$
to the set of blue-yellow cubes.
Thus, the reflection $r_x$ takes $\tilde\bt_{i,\kappa}$ to
$\tilde\bt_{1-i,\bar\kappa}$ so that
$\Tw(\tilde\bt_{1-i,\bar\kappa}) = -\Tw(\tilde\bt_{i,\kappa})$.
By Lemma \ref{independence} we have
$\Tw(\tilde\bt_{i,\bar\kappa}) = -\Tw(\tilde\bt_{i,\kappa})$
and therefore
\[ \Tw_\kappa(\bt_1) =
\Tw(\tilde\bt_{1,\kappa}) = 
-\Tw(\tilde\bt_{1,\bar\kappa}) = 
\Tw(\tilde\bt_{0,\kappa}) = 
\Tw_\kappa(\bt_0), \]
as desired.
Case 1 for $r_y$ is similar.
The set of red-green cubes is taken to itself by $r_z$ 
and therefore, $\bt_3 = r_z(\bt_0)$,
case 1 for $r_z$ boils down to
\[ \Tw_\kappa(\bt_3) =
\Tw(\tilde\bt_{3,\kappa}) = 
-\Tw(\tilde\bt_{0,\kappa}) = 
-\Tw_\kappa(\bt_0), \]
completing item 1.
Finally, the rotation $\rho$
takes the set of red-green cubes to the set of blue-yellow cubes.
Set $\bt_4 = \rho(\bt_0)$:
$\tilde\bt_{4,\bar\kappa}$ is a rotation of $\tilde\bt_{0,\kappa}$
so that
$\Tw(\tilde\bt_{4,\kappa}) = -\Tw(\tilde\bt_{4,\bar\kappa}) =
-\Tw(\tilde\bt_{0,\kappa})$,
completing the proof.
\end{proof}

The following proposition is similar,
but for slab tilings.
A symmetry of $\RR^3$ which preserves unit cubes
is a map of the form $q(w) = \mathbf{v_0} + \mathbf{Qw}$ 
where $\mathbf{v_0} \in \ZZ^3$ and $\mathbf{Q}$ is a $3 \times 3$ permutation matrix with signs.
Let $\sign(q) = \det(\mathbf{Q}) \in \{\pm 1\}$.

\smallskip
\begin{prop}
\label{prop:slabsymmetry}
Consider $q$ a symmetry of $\RR^3$, as above.
Let $\cR_0$ be a cubiculated region and $\bt_0$ be a slab tiling of $\cR_0$.
Let $\cR_1 = q[\cR_0]$, a cubiculated region,
and $\bt_1 = q[\bt_0]$, a tiling of $\cR_1$.
Let $\kappa_0$ be a pair of colors and $\kappa_1 = q[\kappa_0]$.
Let $\tilde\cR_{0,\kappa_0}$ and $\tilde\cR_{1,\kappa_1}$
be the respective transformed regions, with domino tilings
$\tilde\bt_{0,\kappa_0}$ and $\tilde\bt_{1,\kappa_1}$.
We then have
\[ \Tw_{\kappa_0}(\bt_0) = \Tw(\tilde\bt_{0,\kappa_0}) =
\sign(q) \Tw(\tilde\bt_{1,\kappa_1}) = \sign(q) \Tw_{\kappa_1}(\bt_1). \]
\end{prop}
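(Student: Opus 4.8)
The plan is to reduce everything to the symmetry properties of the domino twist recorded in Section~\ref{section:domino}. The two outer equalities $\Tw_{\kappa_0}(\bt_0) = \Tw(\tilde\bt_{0,\kappa_0})$ and $\sign(q)\Tw(\tilde\bt_{1,\kappa_1}) = \sign(q)\Tw_{\kappa_1}(\bt_1)$ are merely the definition of the twist, so the whole content is the middle identity $\Tw(\tilde\bt_{0,\kappa_0}) = \sign(q)\,\Tw(\tilde\bt_{1,\kappa_1})$. I would prove it by exhibiting a symmetry $\tilde q$ of $\RR^3$ of the same type (a signed permutation matrix together with an integer translation) with $\tilde q[\tilde\cR_{0,\kappa_0}] = \tilde\cR_{1,\kappa_1}$ and $\tilde q[\tilde\bt_{0,\kappa_0}] = \tilde\bt_{1,\kappa_1}$, and with $\sign(\tilde q) = \sign(q)$. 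Granting such a $\tilde q$, the properties of the domino twist (invariance under rotations, change of sign under each of the reflections $x=0$, $y=0$, $z=0$) combine multiplicatively into $\Tw(\tilde q[\tilde\bt]) = \sign(\tilde q)\,\Tw(\tilde\bt)$ for every lattice symmetry $\tilde q$, which is exactly what is required.

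The first step towards $\tilde q$ is a color-tracking observation. Since the linear part of $q$ reduces mod $2$ to a permutation of coordinates, which preserves $\ker\phi$, the symmetry $q$ acts on the four colors; and because $\kappa_1 = q[\kappa_0]$, a cube $C$ is good for $(\bt_0,\kappa_0)$ if and only if $q(C)$ is good for $(\bt_1,\kappa_1)$. Consequently $q$ carries the preserved cubes of the first transformation bijectively onto those of the second and slabs to slabs; passing to the inflated cuboids, it induces a combinatorial bijection $\tilde q$ from $\tilde\bt_{0,\kappa_0}$ onto $\tilde\bt_{1,\kappa_1}$. To see that $\tilde q$ is an honest lattice symmetry I would use the explicit inflation recipe: on the relevant good sublattice the inflation is an affine similarity (a $45^{\circ}$ rotation and scaling by $\sqrt2$ in the view-plane, the identity along the view-axis). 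Writing $\iota_0,\iota_1$ for the two inflations, one has $\tilde q = \iota_1 \circ q \circ \iota_0^{-1}$, whose linear part is $R_1 Q R_0^{-1}$ with $R_0,R_1$ orthogonal (the factors $\sqrt2$ cancel) and $Q$ orthogonal. Thus $\tilde q$ is an integral affine map with orthogonal linear part, that is, a signed permutation together with an integer translation, as needed.

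It then remains to check $\sign(\tilde q) = \sign(q)$, and here I would argue by multiplicativity: since both $\sign$ and the assignment $q \mapsto \tilde q$ are compatible with composition, it suffices to treat a generating set — the unit translations, the reflections $r_x, r_y, r_z$, and the coordinate transpositions. For translations and for symmetries fixing the view axis the two inflations coincide up to the harmless replacement of $\kappa_0$ by $\bar\kappa_0$ governed by Lemma~\ref{independence}, so $R_0 = R_1$ and the determinant computation collapses to $\det \tilde Q = \det Q$; concretely, $r_x$ induces a reflection of $\tilde\cR_{0,\kappa_0}$, and the $(x,y)$-transposition induces $(a,b)\mapsto(a,-b)$ in the rotated inflation coordinates, both of sign $-1$, matching their sign in $\RR^3$.

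The step I expect to be the genuine obstacle is the coordinate transpositions that \emph{change} the view axis (the swaps involving $z$). For these, $\kappa_1$ is a good pair for a different view and $\tilde\cR_{1,\kappa_1}$ is produced by a different inflation $\iota_1$, so the cancellation $\det R_1/\det R_0 = 1$ is no longer automatic: it amounts to verifying that the rotate-$45^{\circ}$-and-scale recipe is applied with the same handedness in all three views, i.e. that the construction of Section~\ref{section:coloring} is covariant and equi-oriented under permuting the roles of the axes. This is a finite, convention-level check against the explicit vertex formulas; once it is confirmed, $\sign(\tilde q) = \sign(q)$ holds for every generator, and multiplicativity yields the proposition in general.
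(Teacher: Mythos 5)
Your proposal is correct, and it is a systematized version of what the paper actually does: the paper's own ``proof'' is a one-line pointer to Proposition~\ref{prop:mixedsymmetry}, whose argument tracks, generator by generator ($r_x$, $r_y$, $r_z$, $\rho$), how the symmetry permutes the good-color classes and then combines the domino twist's reflection/rotation properties with Lemma~\ref{independence}; you reach the same generator check but package it through the conjugated map $\tilde q = \iota_1 \circ q \circ \iota_0^{-1}$ and a determinant computation, which buys a cleaner multiplicative structure and makes explicit the $\kappa$-bookkeeping (e.g.\ that a unit translation sends $\kappa_0$ to $\bar\kappa_0$) that the paper leaves implicit. Two refinements: first, your ``genuine obstacle'' is smaller than you fear, since each inflation $\iota_i$ has linear part of determinant $2$ (a $45^{\circ}$ rotation times a $\sqrt2$-scaling in the view plane, identity along the axis) \emph{whatever} the handedness of the $\pm 45^{\circ}$ convention, so $\det R_1/\det R_0 = 1$ automatically even when the view axis changes; the only residual check is that the floor diagrams of the three views are read in frames related by rotations (not reflections) of the standard frame, which is exactly the ``consistency in a few cases'' the paper delegates to the reader. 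Second, your blanket identity $\Tw(\tilde q[\tilde\bt]) = \sign(\tilde q)\,\Tw(\tilde\bt)$ uses rotation invariance of the domino twist, which the paper states only for cylinders; this is harmless in the intended setting, because for a box each transformed region $\tilde\cR_\kappa$ is a cylinder along the corresponding view axis, but for a general cubiculated region $\cR_0$ (as the proposition is loosely stated) you should either restrict to boxes or cite the more general invariance from the references, since the effect-based definition of the twist is manifestly reflection-antisymmetric and translation-invariant but not manifestly rotation-invariant.
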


\begin{proof}
The proof is easy and similar to that of Proposition~\ref{prop:mixedsymmetry}.
The reader may want to check consistency between the two propositions
in a few cases.
\end{proof}

\bigskip


\section{Connectivity under local moves}
\label{section:localmoves}

In this section we return to our main problem:
connectivity under local moves of spaces of tilings.
We first show that the space of slab tilings 
is connected via flips for a few special regions.
We then show that
there exists no finite fixed set of local moves
which works for slab tilings of arbitrary boxes.
Equivalently, there is no bound on the number of slabs
which have to participate in each move.

\bigskip

We begin with two-floored regions.

\smallskip
\begin{lemma}
Given a quadriculated disk $\cD \subset \mathbb{R}^2$,
let $\cR = \cD \times [0,2]$,
the three dimensional region with base $\cD$ and two floors.
The space of slab tilings of $\cR$ is 
either empty or connected under flips.
\end{lemma}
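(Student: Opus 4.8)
The plan is to reduce this three-dimensional problem to a purely two-dimensional one and then invoke the classical flip-connectivity of domino tilings. The starting observation is that, because $\cR$ has only two floors, for each unit square $s$ of $\cD$ the two cubes lying over $s$ fall into exactly one of two cases: either both belong to a single vertical slab, or the lower one belongs to a first-floor horizontal slab and the upper one to a second-floor horizontal slab (if the upper cube lay in a vertical slab, so would the lower one). Let $V\subseteq\cD$ be the set of squares of the first type and $H=\cD\setminus V$ the rest. A slab tiling $\bt$ then determines a domino tiling $D$ of $V$ (the footprints of the vertical slabs), a tiling $S_0$ of $H$ by $2\times 2$ squares (the first-floor horizontal slabs), and a tiling $S_1$ of $H$ by $2\times 2$ squares (the second-floor horizontal slabs).

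First I would prove that any region admits \emph{at most one} tiling by $2\times 2$ squares: the square covering the lexicographically smallest cell is forced to have that cell as its lower-left corner, and removing it and inducting shows uniqueness. In particular $S_0=S_1$, so the data of a slab tiling of $\cD\times[0,2]$ is exactly the data of a single two-dimensional \emph{mixed tiling} of $\cD$ by dominoes and $2\times 2$ squares: put vertical slabs over the dominoes and stacked horizontal slabs over the squares. This correspondence is a bijection.

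Next I would translate flips. A flip acts inside a box $Q\times[0,2]$, exchanging the three ways of filling it with two parallel slabs. In the associated mixed tiling of $\cD$ these are precisely: rotating two dominoes that together form the square $Q$ (an ordinary domino flip), and replacing the square $Q$ by two parallel dominoes, or the reverse, in either orientation. The step I expect to require the most care is verifying that this dictionary is \emph{exact} in both directions, so that every slab flip is one of these moves and every such move is a legal slab flip, with no hidden parity or alignment constraint on $Q$.

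Finally I would connect everything to a canonical form. From any mixed tiling, applying the move ``square $\mapsto$ two horizontal dominoes'' to each of its finitely many, pairwise disjoint squares yields a pure domino tiling of $\cD$; this shows en passant that $\cD$ is domino-tileable whenever a slab tiling exists. Since $\cD$ is a quadriculated disk, hence simply connected, I may then use the classical fact that the domino tilings of a simply connected planar region form a single flip class, and ordinary domino flips are among our moves. Thus every slab tiling is flip-connected to a pure domino tiling, and all pure domino tilings are flip-connected to one another, giving connectivity of the whole space; if no tiling exists the space is empty, which settles the remaining case.
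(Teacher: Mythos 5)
Your proof is correct and follows essentially the same route as the paper: both hinge on the uniqueness of tilings by $2\times 2$ squares (which forces the horizontal slabs on the two floors to come in stacked pairs), then flip those pairs to vertical slabs and conclude via the classical flip-connectivity of domino tilings of a quadriculated disk \cite{saldanhatomei1995,thurston1990}. Your explicit bijection with planar mixed tilings and the flip dictionary merely make precise the bookkeeping the paper leaves implicit; it is not a different argument.
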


\begin{proof}
Let $\tilde{\cD} \subset \RR^2$ be a bounded region
which can be tiled by $2\times 2$ squares
(the region $\tilde\cD$ need not be connected).
We claim that
there is only one way of tiling $\tilde{\cD}$ with such squares.
The proof of the claim
is by induction on the number of unit squares of $\tilde{\cD}$.
The case where $\tilde{\cD}$ comprises four unit squares is obvious.
For the inductive step, notice there is only one way of placing a slab
on a particular corner of $\tilde{\cD}$,
and then remove the four corresponding unit squares.

We return to $\cR$. We may speak in terms of the floor diagram.
On the first floor, omit all dominoes (i.e., parts of vertical slabs).
The remaining planar region $\tilde\cD_{1}$
must be tiled by $2\times 2$ squares,
which can only be done in one way.
On the second floor, omit all dominoes again.
The remaining planar region $\cD_{2}$ is the same as $\cD_{1}$.
Hence, it must be tiled the same way by $2\times 2$ squares.

A square on the first floor can be paired
with the square which is right above it on the second floor.
Flipping each one of these pairs creates
a tiling with all slabs vertical, or, equivalently,
identical domino-only configurations on both floors.
By the domino flip connectedness result in dimension $2$
\cite{saldanhatomei1995,thurston1990},
we can reach a given domino tiling on both floors,
proving connectivity.
\end{proof}

\smallskip
\begin{prop}
\label{prop:44N}
Consider a box of the form $4\times 4 \times N$, $N\in \NN$.
The space of slab tilings of the box
has only one flip component.
\end{prop}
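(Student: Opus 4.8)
The plan is to reduce the three-dimensional problem to a combinatorial problem in the $4\times 4$ cross-section and then induct on $N$. By Lemma~\ref{lemma:unique}, the $2\times 2$-square tilings that appear on each floor are forced, so a slab tiling of $\cR = [0,4]\times[0,4]\times[0,N]$ is completely determined by the data of its \emph{vertical} slabs: for each interface $n$ (between floors $n$ and $n+1$, $0\le n\le N-2$) let $V_n$ be the set of dominoes in the $4\times 4$ cross-section obtained by projecting the vertical slabs that cross that interface. The horizontal slabs on floor $n$ then tile the complement of $V_{n-1}\cup V_n$, and by the uniqueness claim proved inside the two-floor lemma (a planar region admits at most one tiling by $2\times 2$ squares) they are recovered uniquely. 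Thus a tiling is the same as a sequence $(V_0,\dots,V_{N-2})$ of disjoint-domino sets such that the complement of $V_{n-1}\cup V_n$ is tileable by $2\times 2$ squares for every $n$ (with $V_{-1}=V_{N-1}=\varnothing$). In this language a flip in the cube $B\times[n,n+1]$ is a local move that either rotates a pair of dominoes of $V_n$ lying in an aligned $2\times 2$ block $B$, or converts such a pair into two horizontal slabs (deleting it from $V_n$), or the reverse. I would take as target the all-horizontal tiling $\bt_0$, where every $V_n=\varnothing$, so that it suffices to connect an arbitrary tiling to $\bt_0$.

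First I would set up an induction on $N$, the case $N\le 2$ being covered by the two-floor connectivity lemma above (together with the trivial $N=1$ case, where no vertical slabs are possible). For the inductive step I would connect the given tiling to one in which $V_{N-2}=\varnothing$, i.e.\ the top floor is entirely horizontal; once this is achieved nothing crosses the top interface, floors $0,\dots,N-2$ form a genuine $4\times 4\times(N-1)$ sub-box, and the inductive hypothesis finishes the job. So the whole proposition rests on the following clearing statement: \emph{any tiling can be flipped so as to empty the top interface.}

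To clear the top interface I would argue on the top floor, which is tiled by $2\times 2$ squares together with the dominoes of $V_{N-2}$. Whenever $V_{N-2}$ contains two parallel dominoes filling an aligned $2\times 2$ block, I flip that pair down into two horizontal slabs, strictly reducing $|V_{N-2}|$. When no such pair is present, so that the dominoes of $V_{N-2}$ are isolated, the idea is first to enlarge the vertical region: flip suitable neighbouring horizontal pairs upward so that the support of $V_{N-2}$ grows to a full $2\times k$ strip, rearrange the dominoes inside it into aligned blocks by planar domino flips (the two-dimensional connectivity of \cite{saldanhatomei1995,thurston1990}, realized here by the rotate-in-place flips), and finally flip those blocks down. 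Because the $4\times 4$ cross-section is so small, the profiles $V_{N-2}$ whose complement is $2\times 2$-tileable form a short finite list, and I expect to verify the enlargement/clearing sequence for each profile on the list, up to the symmetries of the square.

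The main obstacle, and the place where the argument becomes genuinely three-dimensional, is that the upward enlarging flips require the floor below, floor $N-2$, to present horizontal slabs in exactly the right positions; but that floor also carries the tops of the vertical slabs of $V_{N-3}$, which act as obstacles and may block the desired flips. I would handle this by not treating the top interface in isolation: the two adjacent floors $N-2,N-1$ can be analysed by the same two-floor mechanism once the interface beneath them, $V_{N-3}$, has been locally cleared away from the relevant block, and that local clearing is itself a strictly smaller instance of the same problem. Making this interleaving of ``clear below, then enlarge above'' terminate rather than loop will be the delicate point; the finiteness of the $4\times 4$ profile list, together with a potential function such as the total height $\sum_n n\,|V_n|$ of the vertical slabs, should suffice to guarantee termination and hence reachability of $\bt_0$.
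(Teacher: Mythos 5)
Your encoding is sound and is essentially the same reduction the paper has in mind: a tiling of $[0,4]^2\times[0,N]$ is equivalent to the sequence of interface profiles $(V_0,\dots,V_{N-2})$, the horizontal slabs are recovered uniquely because a planar region has at most one tiling by $2\times 2$ squares, and your classification of flips (rotate an aligned pair inside some $V_n$, or exchange such a pair with two stacked horizontal slabs) is complete, since two slabs share a $2\times 2$ face only when they are parallel. One slip: the forcing of the per-floor $2\times 2$ tilings does not follow from Lemma~\ref{lemma:unique}, which is about slab-type \emph{colorings}; the fact you need is the uniqueness claim proved inside the two-floor lemma, which you do invoke correctly a line later. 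The paper's own proof is precisely a finite casework ``deducing the tree of possibilities of the tilings for each floor'' in the transfer-matrix style of \cite{regulardisk}, with the computation deferred to \cite{notasic}; your profile sequences are the right language for that tree.

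The genuine gap is that the clearing statement --- \emph{any tiling can be flipped to make $V_{N-2}=\varnothing$} --- carries the entire content of the proposition and is nowhere established. Two concrete failures: (a) your recursion is not a ``strictly smaller instance'' in any measured sense, because locally clearing $V_{N-3}$ away from the block $B$ may itself require enlargement flips at interface $N-3$, which need horizontal slabs on floor $N-3$, i.e.\ clearing of $V_{N-4}$, and nothing in the sketch prevents obstructions from being pushed downward and then recreated above; (b) the proposed potential $\sum_n n\,|V_n|$ is not monotone along your own process: each enlargement flip at the top interface \emph{increases} it by $2(N-2)$, the largest possible increment, while clearing steps below decrease it by strictly less per pair, so it cannot certify termination as stated. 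This is not a pedantic complaint: for \emph{domino} tilings of boxes the identical layer-clearing strategy fails outright (the twist of Section~\ref{section:domino} obstructs it, see \cite{segundoartigo}), so the feasibility of clearing here must be extracted from the specific $4\times 4$ profile transitions --- exactly the finite casework that the paper performs via \cite{notasic} and that you postpone with ``I expect to verify.'' To repair the argument you would need either to carry out that transition analysis (showing every admissible profile pair $(V_{N-3},V_{N-2})$ admits an explicit flip sequence emptying the top, with any recursive calls bounded), or to exhibit a potential that genuinely decreases, e.g.\ one weighting \emph{lower} interfaces more heavily together with a proof that each round of ``clear below, enlarge above, flip down'' strictly reduces it.
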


\begin{proof}
This can be done by casework and essentially consists
of deducing the tree of possibilities
of the tilings for each floor.
This is similar to the computation of \textit{domino groups}
in \cite{regulardisk}.
We omit the computation, which is detailed in
\cite{notasic}.  
\end{proof}

\begin{figure}[b]
\centering
   \includegraphics[scale=0.3]{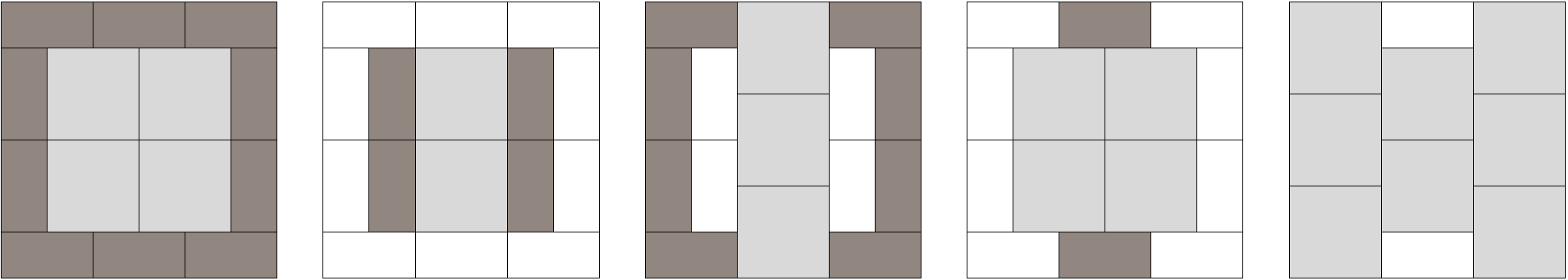}

\caption{A tiling which admits no flips;
$\Tw_\kappa=+4$ for $\kappa$ equal to red-green}
\label{665}
\end{figure}

Let us recall a fact about \textit{domino} tilings and flips:
there are often connected components of size 1,
i.e., tilings which admit no flip.
Figure~\ref{fig:cobtw} shows an example for the $3\times 3\times 2$ box;
there are similar examples in larger boxes
(see \cite{regulardisk} for examples).

An example of a box whose space of \textit{slab} tilings
has components of size one (under flips)
is the $6\times 6\times 5$ box.
One such tiling is shown in Figure~\ref{665}.
Another component (for the same box)
can be obtained by reflecting the tiling
about the plane of the paper.
This example admits slight variations for other regions.
We can add a floor comprising only slabs before the first floor,
or we can pile copies of the tiling of the above figure.
Also, we can generalize the pattern for regions
of the form $2N\times 2N\times 5$,
as in Figure~\ref{885} for the case $8\times 8\times 5$.

\begin{figure}[ht]
    \centering

    \includegraphics[scale=0.2]{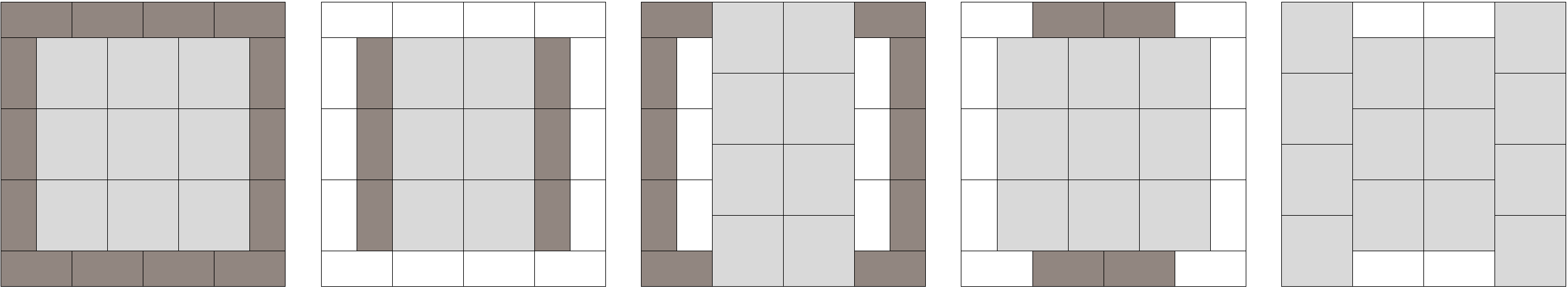}

    \caption{Another tiling which admits no flips;
    in this case, $\Tw_\kappa=+12$}
    \label{885}
\end{figure}

\bigskip

Recall that it is conjectured that the space of domino tilings of a 3d box
is connected via flips and trits.
We now show that
there is no fixed finite set of local moves
under which the set of slab tilings of arbitrary boxes becomes connected.
We first study the family of examples in
Figures~\ref{665} and \ref{885}.

\smallskip
\begin{lemma}
\label{lemma:moves}
Consider the region $\cR = [0,2k]\times [0,2k]\times [0,5l]$,
$k\geq 3$, $l \ge 1$ and a slab tiling $\bt$
following the pattern in Figure~\ref{665}
(piling copies if $l > 1$).
Now consider $\cR_0=[0,2(k-1)]\times [0,2(k-1)]\times [0,5l]$.
In $\cR$, there is only one way of removing and placing back the slabs
which are contained in the interior of $\cR_0$.
\end{lemma}

\begin{proof}
Refer to Figure~\ref{local}.
The region $\cR_0 \subset \cR$ is bounded by the red curve.
The region $\cR_1=[0,2(k-2)]\times [0,2(k-2)]\times [0,5l] \subset \cR_0$
is bounded by the dotted curve.
We proceed by induction on $k$.
We show the inductive process for five consecutive floors.
The slab numbered $1$ is clearly stuck since it is ``sandwiched"
between two other slabs.
Using this argument, on the first three floors,
we proceed to deduce the positions of slabs
which intersect the columns (in the floor diagram)
between the dotted and red lines.
In the figure, this corresponds to the slabs from $1$ to $8$.
We repeat this, but now for slabs intersecting
the rows between the dotted and red lines.
This corresponds to slabs from $9$ to $14$. 

    \begin{figure}[ht]
        \centering
   \def\svgwidth{14cm}
  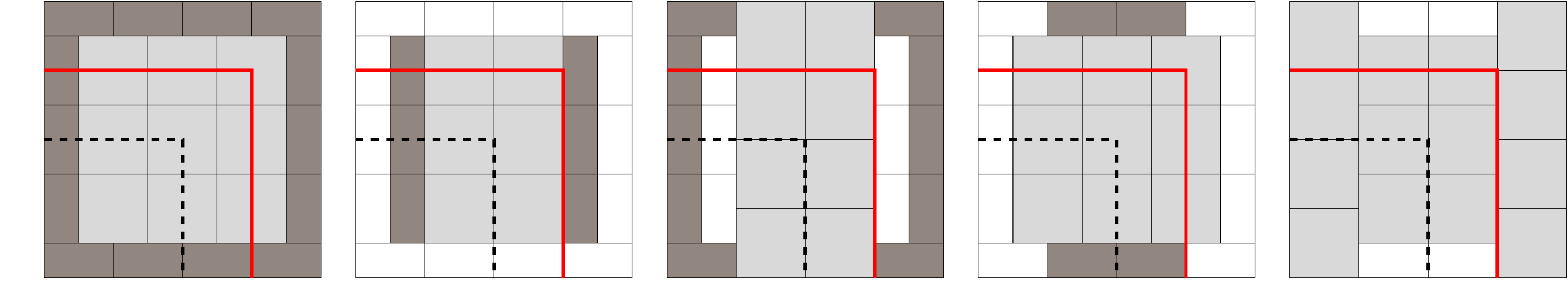
        \caption{The tiling from Figure~\ref{885}, with labels}
        \label{local}
    \end{figure}

    Now, for the last two floors, we first deduce the slabs intersecting the rows between the dotted and red lines ($15$ to $19$) and then we deduce the slabs intersecting the columns between the dotted and red lines ($20$ to $22$). We then apply the inductive step on the box bounded by the dotted curve.
\end{proof}

\bigbreak

\begin{coro}
\label{coro:nolocal}
Let $\cR$ and $\bt$ be as in Lemma~\ref{lemma:moves}.
Let $\tilde\bt \ne \bt$ be another slab tiling of $\cR$.
Then there are at least $2k - 3$ slabs in $\bt$ which are not in $\tilde\bt$.
\end{coro}

\begin{proof}
Consider tilings $\bt$ and $\tilde\bt$ as in the statement.
Let $\cX \subseteq \cR$ be the union of
slabs in $\bt$ which are not in $\tilde\bt$.
Alternatively, $\cX$ is the union of slabs in $\tilde\bt$
which are not in $\bt$. Let $d$ be the number of slabs in $\cX$.
We need to prove that $d \ge 2k - 3$.

We may assume without loss that $\cX$ has connected interior.
Indeed, otherwise let $\hat\cX \subsetneqq \cX$
be the closure of one of the connected components of the interior of $\cX$.
The region $\hat\cX$ is tiled by $\bt$ and $\tilde\bt$ in two different ways.
Let $\hat\bt$ coincide with $\tilde\bt$ in $\hat\cX$
and with $\bt$ elsewhere.
The new tiling $\hat\bt$ is a better example than $\tilde\bt$
(i.e., with a smaller value of $d$).

As in Lemma~\ref{lemma:moves} let
$\cL_{\NE} = \cR \smallsetminus \cR_0 \subset \cR$
be the L shaped region to the NE direction.
Define similar subsets $\cL_{\SE}$, $\cL_{\SW}$ and $\cL_{\NW}$.
By Lemma~\ref{lemma:moves},
we have $\cX \cap \cL_{\NE} \ne \varnothing$;
the intersections of $\cX$ with the other three L shaped sets
are likewise nonempty.
Let $\cM_{\North} = \cL_{\NE} \cap \cL_{\NW}$;
define similarly $\cM_{\South}$, $\cM_{\East}$ and $\cM_{\West}$.
We claim that at least one of the two conditions below hold:
\[ \cX \cap \cM_{\North} \ne \varnothing \ne  \cX \cap \cM_{\South},
\qquad
\cX \cap \cM_{\East} \ne \varnothing \ne  \cX \cap \cM_{\West}. \]
Indeed, assume for instance that $\cX \cap \cM_{\West} = \varnothing$:
the conditions
$\cX \cap \cL_{NW} \ne \varnothing$ and
$\cX \cap \cL_{SW} \ne \varnothing$ imply
$\cX \cap \cM_{\North} \ne \varnothing \ne  \cX \cap \cM_{\South}$;
the other cases are similar.

Assume without loss of generality that 
$\cX \cap \cM_{\North} \ne \varnothing \ne  \cX \cap \cM_{\South}$.
Consider the planes $p_x = \{x\} \times [0,2k] \times [0,5l]$,
$2 \le x \le 2k-2$,
which appear as horizontal lines in
Figures~\ref{665} and \ref{885}.
Given $x$, search for the slabs in
$\bt|_{\cX}$ or $\tilde\bt|_{\cX}$ which cross $p_x$.
We claim there are at least two such slabs.
Notice first that if there are zero such slabs
then $\cX$ becomes essentially disconnected
and we have a contradiction as above.
Suppose then that there exist exactly one such slab.
The region $\cX_{\North} = \cX \cap ([0,x] \times [0,2k] \times [0,5l])$
is tiled (by slabs, by either $\bt$ or $\tilde\bt$):
the volume $V$ of $\cX_{\North}$ is therefore a multiple of $4$.
The region minus two unit cubes is also tiled
(by either $\tilde\bt$ or $\bt$)
and therefore $V-2$ is also a multiple of $4$, a contradiction.
This completes the proof of the claim.

The total number $2d$ of slabs of both $\bt$ and $\tilde\bt$ in $\cX$
is therefore at least $2(2k-3)$, completing the proof.
\end{proof}


\begin{remark}
\label{rem:nolocal}
The corollary above is not sharp,
the best estimate appears to be $2k-1$
(for all $l \ge 1$).
The above formulation is sufficient for our main objective:
no fixed set of local moves
(involving a bounded number of slabs)
connects all slab tilings of large boxes.
More than that:
given a fixed set of local moves,
for sufficiently large $k$ the tiling $\bt$ above
admits none of these moves.
\end{remark}

\bigbreak


\section{Possible values of the triple twist}
\label{section:valuestwist}

Let $\cR$ be a region.
We denote $\TTw[\cR]=\{\TTw(\bt)|\bt \text{ is a slab tiling of } \cR\}$;
the pairs $\kappa_x, \kappa_y, \kappa_z$ are implicit.
Also, for a pair of colors $\kappa$, set
$\Tw_\kappa[\cR]=\{\Tw_\kappa(\bt)|\bt \text{ is a slab tiling of } \cR\}$.
The remainder of this section establishes bounds
on the number of values the triple twist assumes in a given region,
implying a bound on the number of slab flip connected components.
The proof is inspired by a related construction in \cite{segundoartigo}.

\bigbreak

\begin{lemma}
\label{lemma:upperconstant}
There exist constants $C_{+} > 0$ and $C_1>0$
such that the following assertions hold:
If $\cR=[0,N]^3$ is a cube with even side lengths 
then
$|\TTw(\bt)| < C_{+}N^4$ for any slab tiling $\bt$ of $\cR$.
Also for  $\cR=[0,N]^3$ we have
$\lvert\TTw[\cR]\rvert\leq C_1 N^{12}$. 
\end{lemma}

\begin{proof}
Given a pair of colors $\kappa$,
we bound $\lvert\Tw_\kappa[\cR]\rvert$.
The transformed region $\tilde{\cR}$ is contained in a cube of side $N$.
See Figures~\ref{fig:412} and \ref{bound}
for an example of the transformed region for $N=4$.
In Figure~\ref{bound},
the floor is bounded by the dotted $4\times 4$ square.
We show $3$ dominoes on the central column.
In this example, $d_y = d_z = 1$ and there is one effect.

\begin{figure}[ht]
    \centering
    \def\svgwidth{6cm}
\begingroup%
  \makeatletter%
  \providecommand\color[2][]{%
    \errmessage{(Inkscape) Color is used for the text in Inkscape, but the package 'color.sty' is not loaded}%
    \renewcommand\color[2][]{}%
  }%
  \providecommand\transparent[1]{%
    \errmessage{(Inkscape) Transparency is used (non-zero) for the text in Inkscape, but the package 'transparent.sty' is not loaded}%
    \renewcommand\transparent[1]{}%
  }%
  \providecommand\rotatebox[2]{#2}%
  \newcommand*\fsize{\dimexpr\f@size pt\relax}%
  \newcommand*\lineheight[1]{\fontsize{\fsize}{#1\fsize}\selectfont}%
  \ifx\svgwidth\undefined%
    \setlength{\unitlength}{293bp}%
    \ifx\svgscale\undefined%
      \relax%
    \else%
      \setlength{\unitlength}{\unitlength * \real{\svgscale}}%
    \fi%
  \else%
    \setlength{\unitlength}{\svgwidth}%
  \fi%
  \global\let\svgwidth\undefined%
  \global\let\svgscale\undefined%
  \makeatother%
  \begin{picture}(1,0.58703072)%
    \lineheight{1}%
    \setlength\tabcolsep{0pt}%
    \put(0,0){\includegraphics[width=\unitlength,page=1]{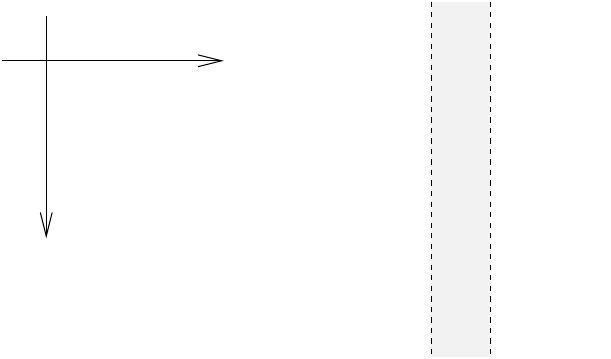}}%
    \put(0.27943247,0.40018945){\color[rgb]{0,0,0}\makebox(0,0)[lt]{\lineheight{1.25}\smash{\begin{tabular}[t]{l}$y$\end{tabular}}}}%
    \put(0.13396781,0.13835306){\color[rgb]{0,0,0}\makebox(0,0)[lt]{\lineheight{1.25}\smash{\begin{tabular}[t]{l}$x$\end{tabular}}}}%
    \put(0,0){\includegraphics[width=\unitlength,page=2]{bound6_3.pdf}}%
  \end{picture}%
\endgroup%

    \caption{A typical floor after transforming a $4\times 4\times N$ box}
    \label{bound}
\end{figure}
    
Fix $f$ a floor of $\tilde{\cR}$, fix a column $c$ of $f$,
and use axes as in Figure~\ref{bound}.
Consider dominoes with a square in $c$:
there are $d_y$ dominoes in the direction of the $y$-axis
and $d_z$ dominoes in the direction of the $z$-axis.
The number of possible effects happening within $c$ is at most $d_yd_z$.
Since $d_y+d_z\leq N$, we have $d_yd_z\leq \frac{N^2}{4}$ by AM-GM inequality.
     
Now, varying $c$ and then $f$,
we see there are at most $\frac{N^4}{4}$ effects within $\tilde{\cR}$.
Thus, for every tiling $\bt$ we have $|\Tw_\kappa(\bt)| \le \frac{N^4}{16}$.
The first claim therefore holds for $C_{+} = \frac14$.

Since $N\geq 2$,
we have 
\[ \lvert\Tw_\kappa[\cR]\rvert \leq
\frac{N^4}{8}+1\leq \frac{5}{32}N^{4}. \]
Since the above estimate holds for $\kappa_x$, $\kappa_y$ and $\kappa_z$,
we obtain
\[ \lvert\TTw[\cR]\rvert\leq \left(\frac{5}{32}\right)^3N^{12}
\leq \frac{1}{2^8}N^{12}. \]
This completes the proof, with $C_1=\frac{1}{2^8}$.
\end{proof}

\smallskip
\begin{lemma}
\label{solenoid}
Let $\cR=[0,6n]\times [0,6n] \times [0,8n]$ be a box.
For any $u,v\in [-n^2,n^2]\cap \ZZ$
we construct a slab tiling $\bt_{u,v}$ of $\cR$
such that $\TTw(\bt_{u,v})=(0,0,2uv)$.
\end{lemma}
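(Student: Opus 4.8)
The plan is to reduce everything to the $z$-view and to exploit the interpretation of the domino twist as a discrete helicity, in the spirit of the solenoid construction of \cite{segundoartigo}. Recall from the discussion preceding Lemma~\ref{independence} that, once we pass to the transformed tiling $\tilde\bt_{\kappa_z}$, the twist $\Tw_{\kappa_z}$ is a signed count of pairs of horizontal slabs lying on different floors whose projections onto the $xy$ plane overlap in exactly two unit squares (the ``case 3'' pairs), each contributing $\pm1$. The goal is to engineer a slab tiling in which these contributions organize themselves into a bilinear form in two independent integer parameters $u$ and $v$.

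First I would split $\cR$ into a ``core'' region, where the nontrivial structure lives, and a ``padding'' region filled by a standard twist-zero tiling (for instance all horizontal slabs, which produce no case-3 pairs). Inside the core I would build two solenoidal bundles of slabs: closed flux tubes $T_u$ and $T_v$, each a bundle of parallel slab-strands whose flux (the number of strands, counted with an orientation supplying the sign) can be set to any value in $[-n^2,n^2]\cap\ZZ$. The cross-sectional room for $\sim n^2$ strands is exactly what the $6n\times 6n$ base provides, while the height $8n$ gives enough room to route $T_u$ and $T_v$ so that they are linked with linking number one, each tube being individually unknotted and untwisted.

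The twist of $\tilde\bt_{\kappa_z}$ then splits into self-contributions of $T_u$ and $T_v$ and a mutual contribution. I would arrange each tube to carry zero self-twist (this is where ``unknotted and untwisted'' is used), so that only the mutual term survives. As in the helicity heuristic for $\int(A_1+A_2)\cdot(B_1+B_2)$, the surviving cross term is symmetric and equals twice the product of the two fluxes times the linking number, giving exactly $2uv$; concretely this is the signed count of case-3 pairs with one slab in $T_u$ and one in $T_v$. This is the step I expect to be the main obstacle: turning the continuous helicity picture into an exact discrete count of signed overlapping pairs, and checking that all self-pairs and all padding pairs cancel, so that $\Tw_{\kappa_z}(\bt_{u,v})=2uv$ on the nose.

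Finally I would show $\Tw_{\kappa_x}(\bt_{u,v})=\Tw_{\kappa_y}(\bt_{u,v})=0$. The cleanest route is symmetry: if the construction can be made invariant, up to a flip and hence up to twist, under a cube symmetry $q$ with $\sign(q)=-1$ that fixes the $x$-view pair $\kappa_x$, then Proposition~\ref{prop:slabsymmetry} forces $\Tw_{\kappa_x}=-\Tw_{\kappa_x}=0$, and similarly for the $y$-view. Failing a clean symmetry, I would instead verify directly that in the $x$- and $y$-views the slabs of $T_u$, $T_v$ and the padding never produce case-3 overlapping pairs, so that those twists vanish term by term. Combining the three computations yields $\TTw(\bt_{u,v})=(0,0,2uv)$, as claimed; this serves as the single-coordinate building block from which, after rotating by the symmetries of Proposition~\ref{prop:slabsymmetry}, the full lower bound on $\lvert\TTw[\cR]\rvert$ is obtained.
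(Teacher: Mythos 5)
Your plan is, in outline, the paper's own construction: the paper pads the box with horizontal slabs (its region $\alpha$) and builds two families of $2n^2$ slab annuli ($\beta$ and $\gamma$, which are exactly your tubes $T_u$ and $T_v$), one threading the other's hole, with fluxes through two square cuts $S_\beta$, $S_\gamma$ adjustable to any values $u,v\in[-n^2,n^2]\cap\ZZ$. The genuine gap sits precisely where you flag ``the main obstacle.'' The paper does \emph{not} carry out a direct signed count of overlapping (case-3) pairs between the two tubes, and it does not invoke any tube-by-tube splitting of the twist into self and mutual terms --- such a splitting is not available a priori, since the twist is a global quantity of the tiling, and ``each tube is unknotted and untwisted, so its self-contribution vanishes'' is an assertion, not an argument. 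Instead the paper proceeds by induction on annulus twists: if $\bt_0$ is obtained from $\bt$ by twisting one annulus of $\beta$, the difference $\tilde\bt_0-\tilde\bt$ of the transformed tilings is a single nontrivial cycle bounding a surface $\tilde S$, and $\Tw(\tilde\bt_0)-\Tw(\tilde\bt)$ is computed by counting the dominoes of $\tilde\bt$ crossing $\tilde S$ --- these are exactly the dominoes belonging to annuli of the other tube, so each twist of a $\beta$-annulus changes $\Tw_{\kappa_z}$ by $\pm 2v$. This induction needs an anchor that your proposal lacks entirely: the claim that when one of the two fluxes is zero, $\bt$ is flip-connected to the all-horizontal tiling (via the planar flip-connectivity results of \cite{saldanhatomei1995,thurston1990}), hence has twist $0$. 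Without that base case, the bilinear formula $2uv$ does not follow from the heuristic.

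Both of your proposed routes to $\Tw_{\kappa_x}(\bt)=\Tw_{\kappa_y}(\bt)=0$ are also weaker than what is needed. The symmetry route fails for generic $(u,v)$: an orientation-reversing symmetry of the box reverses the flux through a cut, so Proposition~\ref{prop:slabsymmetry} relates $\bt_{u,v}$ to a tiling with \emph{negated} parameters rather than to itself, yielding relations between different tilings, not vanishing. Your fallback claim --- that the $x$- and $y$-views produce no nontrivial effects term by term --- is false for this construction: on the floors where $\beta$ passes through $\gamma$'s hole, the transformed tiling $\tilde\bt_{\kappa_x}$ contains vertical dominoes that do produce nonzero effects. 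What the paper actually does is first perform flips (harmless, by flip invariance of $\Tw_{\kappa_x}$) to make the $\alpha$ slabs horizontal, and then pair floor $j$ with floor $6n+1-j$ for $1\le j\le 2n$: these two floors coincide except for the colors of the vertical dominoes, so their effects cancel in matched pairs. You would need this cancellation argument, or a genuinely new one, to close the $x$- and $y$-view step.
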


\begin{proof}
Consider the subdivision of the $[0,6n]\times [0,6n]\times [0,8n]$ box
in parts $\alpha,\beta$ and $\gamma$ as in Figure~\ref{bound2}.
We tile $\alpha$ with horizontal slabs.
We can tile $\beta$ and $\gamma$ both as collections of $2n^2$ annuli.
In Figure~\ref{bound2}, there are four kinds of floors.
Each of them appears $2n$ times.
Each rectangle on each floor has dimensions either
$6n\times 2n$, $2n\times 6n$ or $2n\times 2n$.

\begin{figure}[ht]
    \centering
  \def\svgwidth{12cm}
    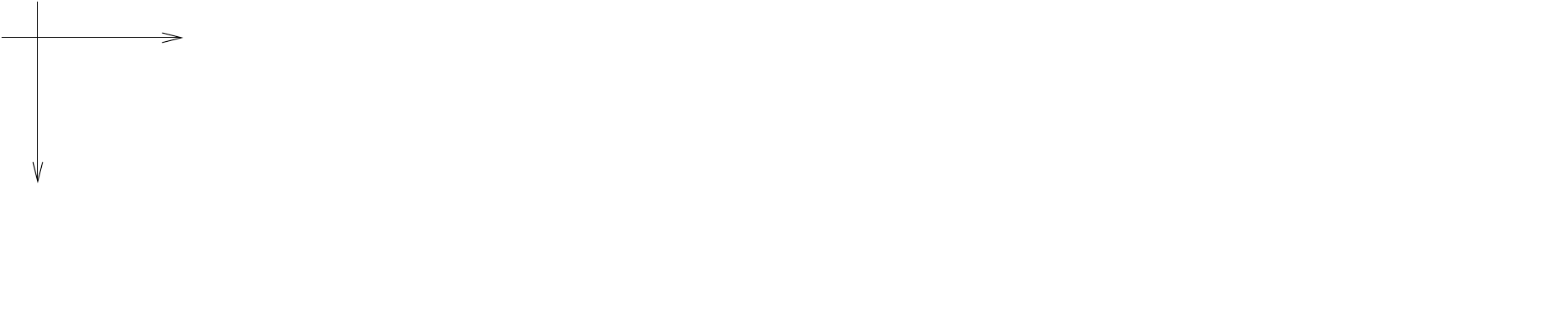

    \caption{Floor diagram of subdivision of
the $[0,6n]\times [0,6n]\times [0,8n]$ box}
    \label{bound2}
\end{figure}

Consider the surfaces $S_{\beta}$ and $S_{\gamma}$
which are both squares of side $2n$,
depicted as red cuts in Figure~\ref{bound6}.
Note that the top left vertex of a floor on the upper (resp. lower) diagram
is of the form $(0,*,0)$ (resp. $(*,0,0)$).
The surfaces $S_{\beta}$ and $S_{\gamma}$ are depicted in red.

\begin{figure}[ht]
\centering
   \def\svgwidth{10cm}
\begingroup%
  \makeatletter%
  \providecommand\color[2][]{%
    \errmessage{(Inkscape) Color is used for the text in Inkscape, but the package 'color.sty' is not loaded}%
    \renewcommand\color[2][]{}%
  }%
  \providecommand\transparent[1]{%
    \errmessage{(Inkscape) Transparency is used (non-zero) for the text in Inkscape, but the package 'transparent.sty' is not loaded}%
    \renewcommand\transparent[1]{}%
  }%
  \providecommand\rotatebox[2]{#2}%
  \newcommand*\fsize{\dimexpr\f@size pt\relax}%
  \newcommand*\lineheight[1]{\fontsize{\fsize}{#1\fsize}\selectfont}%
  \ifx\svgwidth\undefined%
    \setlength{\unitlength}{1674bp}%
    \ifx\svgscale\undefined%
      \relax%
    \else%
      \setlength{\unitlength}{\unitlength * \real{\svgscale}}%
    \fi%
  \else%
    \setlength{\unitlength}{\svgwidth}%
  \fi%
  \global\let\svgwidth\undefined%
  \global\let\svgscale\undefined%
  \makeatother%
  \begin{picture}(1,0.49940263)%
    \lineheight{1}%
    \setlength\tabcolsep{0pt}%
    \put(0,0){\includegraphics[width=\unitlength,page=1]{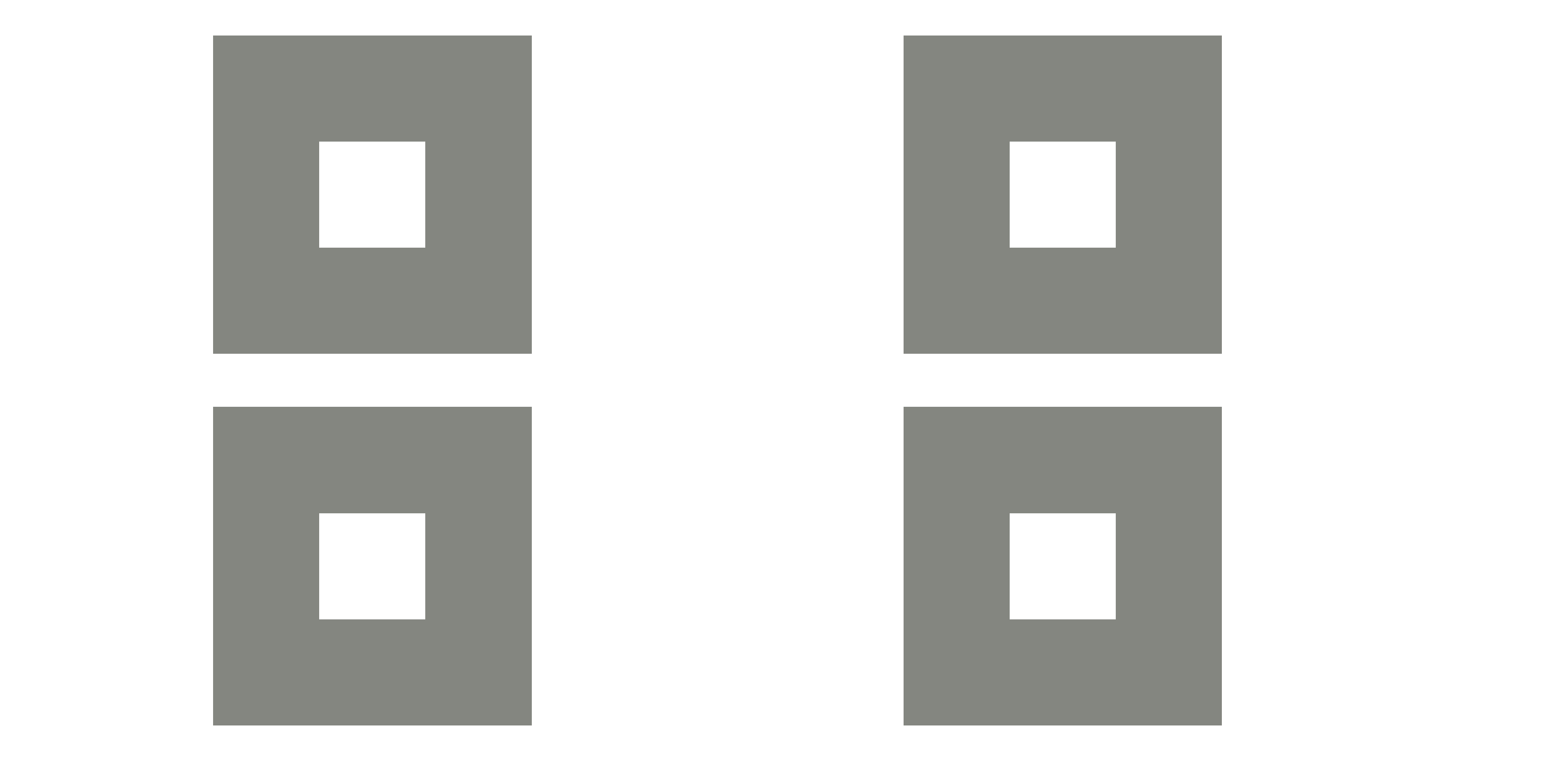}}%
    \put(0.2375254,0.00269297){\color[rgb]{0,0,0}\makebox(0,0)[lt]{\lineheight{1.25}\smash{\begin{tabular}[t]{l}$S_{\gamma}$\end{tabular}}}}%
    \put(0,0){\includegraphics[width=\unitlength,page=2]{bound6_4_2.pdf}}%
    \put(0.04312486,0.35830372){\color[rgb]{0,0,0}\makebox(0,0)[lt]{\lineheight{1.25}\smash{\begin{tabular}[t]{l}$z$\end{tabular}}}}%
    \put(0.09053963,0.44237085){\color[rgb]{0,0,0}\makebox(0,0)[lt]{\lineheight{1.25}\smash{\begin{tabular}[t]{l}$x$\end{tabular}}}}%
    \put(0,0){\includegraphics[width=\unitlength,page=3]{bound6_4_2.pdf}}%
    \put(0.2375254,0.49377448){\color[rgb]{0,0,0}\makebox(0,0)[lt]{\lineheight{1.25}\smash{\begin{tabular}[t]{l}$S_{\beta}$\end{tabular}}}}%
    \put(0.04312486,0.12122989){\color[rgb]{0,0,0}\makebox(0,0)[lt]{\lineheight{1.25}\smash{\begin{tabular}[t]{l}$y$\end{tabular}}}}%
    \put(0.09053963,0.20529702){\color[rgb]{0,0,0}\makebox(0,0)[lt]{\lineheight{1.25}\smash{\begin{tabular}[t]{l}$z$\end{tabular}}}}%
    \put(0,0){\includegraphics[width=\unitlength,page=4]{bound6_4_2.pdf}}%
  \end{picture}%
\endgroup%

\caption{Example of $\beta$ ($y$-axis view) and $\gamma$ ($x$-axis view)
on the $[0,12]\times [0,12]\times [0,16]$ box}
\label{bound6}
\end{figure}

Look at $S_{\beta}$ from the $y$-axis view.
We regard each floor of $S_{\beta}$ with only grey pieces (a grey floor)
as a domino tiling of an annulus,
and we consider the flux with respect to the red cut.
The flux across $S_{\beta}$ is the sum of the fluxes on all grey floors.
The flux across $S_{\gamma}$ is defined analogously.
We see the fluxes across each of the surfaces $S_{\beta}$ and $S_{\gamma}$
are integers in the interval $[-n^2,n^2]$. 
We can choose the fluxes across $S_{\beta}$ and $S_{\gamma}$
to be equal to $u$ and $v$, respectively.
We call this tiling $\bt=\bt_{u,v}$.

    We now verify $\bt$ satisfies the desired properties.
We will first see $\Tw_{\kappa_x}(\bt)=\Tw_{\kappa_y}(\bt)=0$.
Without loss, we prove the first case.
Consider $\bt_{\kappa_x}$ and refer to Figure~\ref{fig:verdeamarelo}.
After a sequence of flips, the slabs of $\alpha$ can be made horizontal.
Slabs from $\beta$ and $\gamma$ from floors from $(2n+1)$-th to $(4n)$-th,
that is, where $\beta$ passes through $\gamma$,
are transformed into vertical dominoes in $\tilde \bt_{\kappa_x}$.
These vertical dominoes do not affect the ones from $\alpha$.
Now, for $1\leq j\leq 2n$,
the floors $j$ and $6n+1-j$ in $\tilde \bt_{\kappa_x}$ are equal,
except for the vertical dominoes,
which have different colors on the floor diagram.
This means the effects on floors $j$ and $6n+1-j$ cancel.
Thus, $\Tw_{\kappa_x}(\bt)=0$.

    We now prove $\Tw_{\kappa_z}(\bt)=\pm 2uv$.
We will consider $\tilde{\bt}$ as a dimer covering $\bt^*$
on the graph $\cR^*$ whose vertices are centers of squares in $\tilde{\cR}$
and edges join centers of adjacent squares.
We draw $\cR^*$ on top of $\cR$ for ease of visualization.
We put red dots on the squares of $\cR$ painted with good colors.
We proceed to define surfaces
$\tilde S_{\beta}$ and $\tilde S_{\gamma}$ as follows.
From the $z$-axis view,
look at the first floor of $\beta$ which intersects $S_{\beta}$.
Consider the red dots immediately above or below the red cut
and draw a polygonal line joining them from left to right
as in Figure~\ref{bound7}.
Then, $\tilde S_{\beta}$ is a curtain with base this polygonal line
and height $2n-1$.
The definition of the surface $\tilde S_{\gamma}$ is analogous:
we now consider red dots to the left or right of the red cut,
also in Figure~\ref{bound7}.
In Figure~\ref{bound7},
the polygonal lines of $\tilde S_{\beta}$ and $\tilde S_{\gamma}$
are dotted and the surfaces $S_{\beta}$ and $S_{\gamma}$
are represented as full lines.

    \begin{figure}[ht]
        \centering
    	\def\svgwidth{10cm}
\begingroup%
  \makeatletter%
  \providecommand\color[2][]{%
    \errmessage{(Inkscape) Color is used for the text in Inkscape, but the package 'color.sty' is not loaded}%
    \renewcommand\color[2][]{}%
  }%
  \providecommand\transparent[1]{%
    \errmessage{(Inkscape) Transparency is used (non-zero) for the text in Inkscape, but the package 'transparent.sty' is not loaded}%
    \renewcommand\transparent[1]{}%
  }%
  \providecommand\rotatebox[2]{#2}%
  \newcommand*\fsize{\dimexpr\f@size pt\relax}%
  \newcommand*\lineheight[1]{\fontsize{\fsize}{#1\fsize}\selectfont}%
  \ifx\svgwidth\undefined%
    \setlength{\unitlength}{712bp}%
    \ifx\svgscale\undefined%
      \relax%
    \else%
      \setlength{\unitlength}{\unitlength * \real{\svgscale}}%
    \fi%
  \else%
    \setlength{\unitlength}{\svgwidth}%
  \fi%
  \global\let\svgwidth\undefined%
  \global\let\svgscale\undefined%
  \makeatother%
  \begin{picture}(1,0.48174157)%
    \lineheight{1}%
    \setlength\tabcolsep{0pt}%
    \put(0,0){\includegraphics[width=\unitlength,page=1]{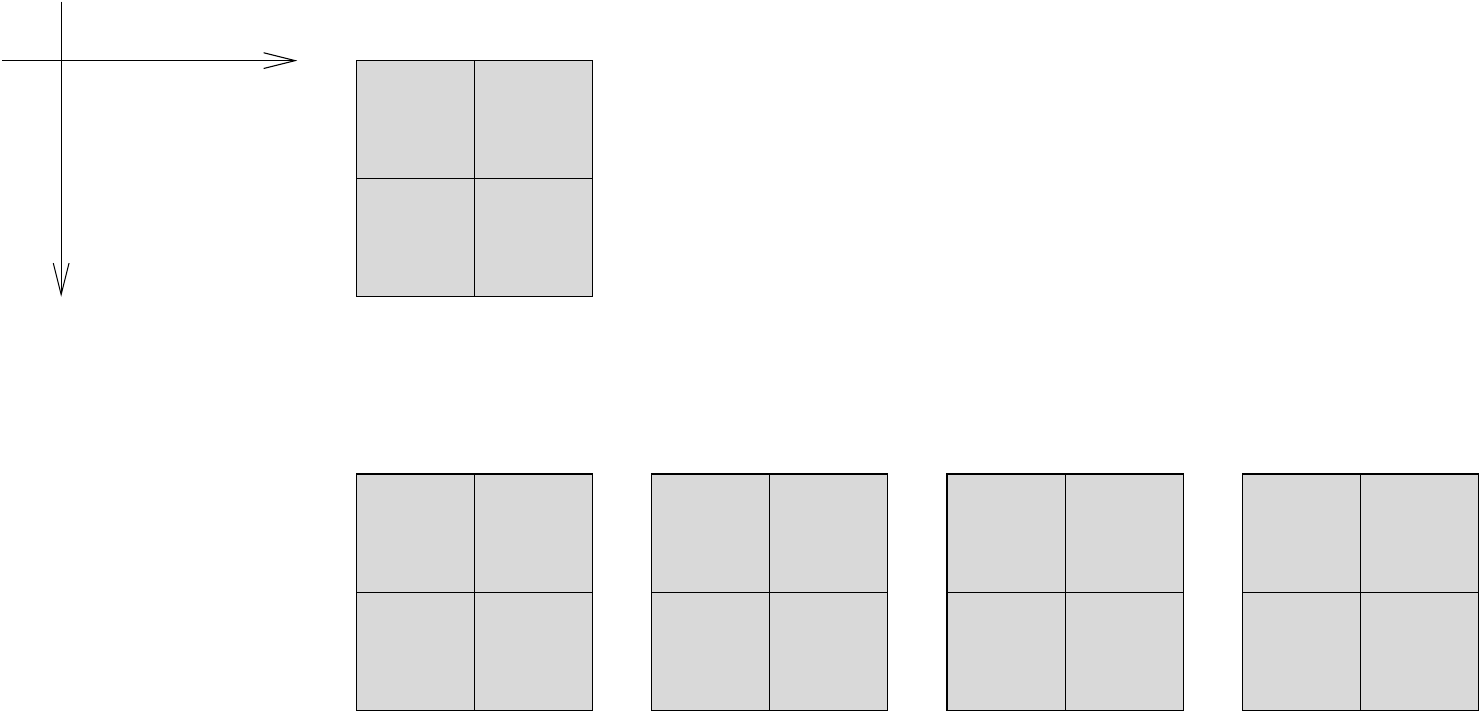}}%
    \put(0.07306812,0.24976243){\color[rgb]{0,0,0}\makebox(0,0)[lt]{\lineheight{1.25}\smash{\begin{tabular}[t]{l}$x$\end{tabular}}}}%
    \put(0.15269483,0.3930905){\color[rgb]{0,0,0}\makebox(0,0)[lt]{\lineheight{1.25}\smash{\begin{tabular}[t]{l}$y$\end{tabular}}}}%
    \put(0,0){\includegraphics[width=\unitlength,page=2]{bound6_4_3.pdf}}%
  \end{picture}%
\endgroup%

        \caption{Construction of the surfaces
$\tilde S_{\beta}$ (above) and $\tilde S_{\gamma}$ (below)
following the example in Figure~\ref{bound6}}
        \label{bound7}
    \end{figure}
    
    We see that if we twist one annulus of $\beta$ (resp $\gamma$)
the flux across $S_{\beta}$ (resp $S_{\gamma}$)
of the domino tiling in Figure \ref{bound6} changes by $\pm 1$.
Thus, the flux across $\tilde S_{\beta}$ (resp $\tilde S_{\gamma}$)
changes by $\pm 2$.
Note that translating the new surface cuts
does not change the respective fluxes.
We do not need to be careful with orientations and signs here.
Also, $\Tw_{\kappa_z}(\bt)$ changes by $\pm 2v$
(resp $\pm 2u$).

To prove this, without loss,
let $\bt_0$ be $\bt$ after twisting one of the annuli of $\beta$.
The difference $\tilde \bt_0-\tilde{\bt}$
has one non trivial cycle which bounds an oriented surface $\tilde{S}$.
We draw $\tilde \bt_0 - \tilde{\bt}$ on top of $\cR$
as in Figure~\ref{bound8}.
The surface $\tilde{S}$ bounded by $\tilde \bt_0-\tilde{\bt}$
is drawn as black polygonal lines.
Full lines represent the first and last floors of this cycle.
The first and last floors of two annuli from $\tilde{\gamma}$
are shown as purple polygonal lines.
Full segments represent the actual dimers in the tiling.
Notice $\tilde{S}$ is tangent to $\tilde S_{\gamma}$.
To calculate $t = \Tw(\tilde \bt_0)-\Tw(\tilde{\bt})$,
we must count dominoes from $\tilde{\bt}$ passing through $S$.
These are exactly the ones which are part of an annulus in $\tilde{\gamma}$.
By twisting one annulus from $\tilde{\gamma}$ we see $t$ changes by $\pm 2$,
hence $t= \pm 2v$ by induction.

\begin{figure}[ht]
        \centering
    \includegraphics[scale=0.2]{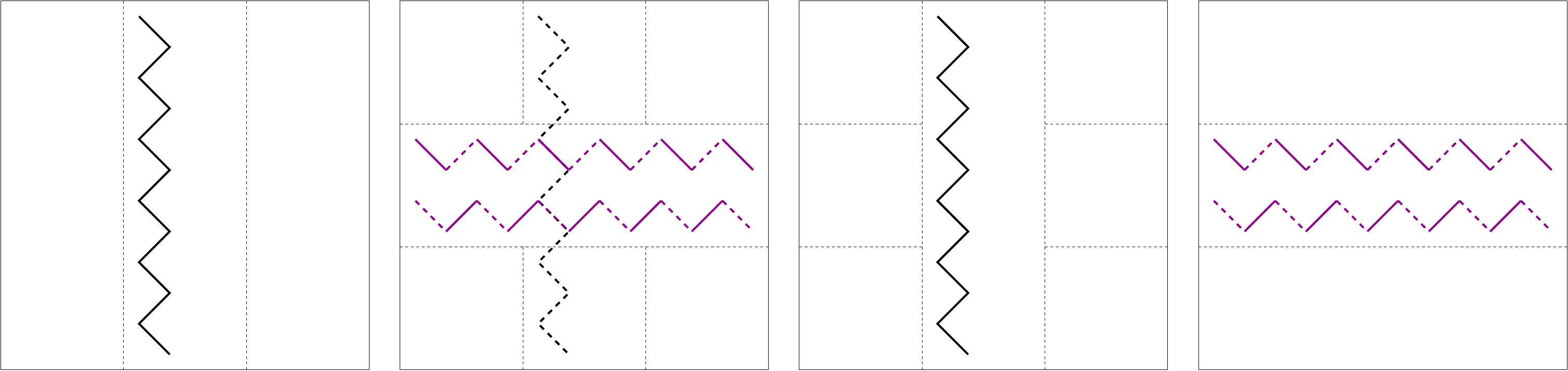}

        \caption{Some floors of the $[0,12]\times [0,12]\times [0,16]$ box}
        \label{bound8}
    \end{figure}

\begin{figure}[ht]
        \centering
  \def\svgwidth{14cm}
\begingroup%
  \makeatletter%
  \providecommand\color[2][]{%
    \errmessage{(Inkscape) Color is used for the text in Inkscape, but the package 'color.sty' is not loaded}%
    \renewcommand\color[2][]{}%
  }%
  \providecommand\transparent[1]{%
    \errmessage{(Inkscape) Transparency is used (non-zero) for the text in Inkscape, but the package 'transparent.sty' is not loaded}%
    \renewcommand\transparent[1]{}%
  }%
  \providecommand\rotatebox[2]{#2}%
  \newcommand*\fsize{\dimexpr\f@size pt\relax}%
  \newcommand*\lineheight[1]{\fontsize{\fsize}{#1\fsize}\selectfont}%
  \ifx\svgwidth\undefined%
    \setlength{\unitlength}{3007bp}%
    \ifx\svgscale\undefined%
      \relax%
    \else%
      \setlength{\unitlength}{\unitlength * \real{\svgscale}}%
    \fi%
  \else%
    \setlength{\unitlength}{\svgwidth}%
  \fi%
  \global\let\svgwidth\undefined%
  \global\let\svgscale\undefined%
  \makeatother%
  \begin{picture}(1,0.29697373)%
    \lineheight{1}%
    \setlength\tabcolsep{0pt}%
    \put(0,0){\includegraphics[width=\unitlength,page=1]{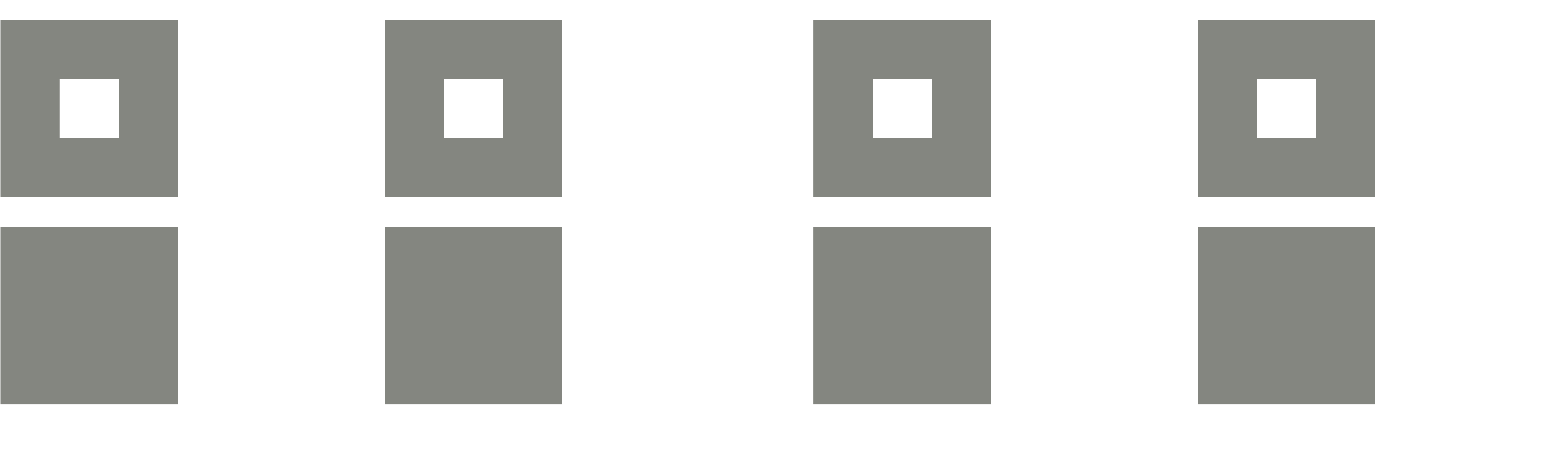}}%
    \put(0.05682675,0.00151679){\color[rgb]{0,0,0}\makebox(0,0)[lt]{\lineheight{1.25}\smash{\begin{tabular}[t]{l}$S_{\gamma}$\end{tabular}}}}%
    \put(0,0){\includegraphics[width=\unitlength,page=2]{bound6_4_5.pdf}}%
    \put(0.05682675,0.29382255){\color[rgb]{0,0,0}\makebox(0,0)[lt]{\lineheight{1.25}\smash{\begin{tabular}[t]{l}$S_{\beta}$\end{tabular}}}}%
    \put(0,0){\includegraphics[width=\unitlength,page=3]{bound6_4_5.pdf}}%
  \end{picture}%
\endgroup%

        \caption{We continue with the example of Figure~\ref{bound6}}
        \label{bound6(copy1)}
    \end{figure}
    
Finally, we claim that if either flux across $\tilde{S_{\beta}}$
or $\tilde S_{\gamma}$ is equal to $0$,
we can take $\bt$ to a tiling with only horizontal slabs,
and thus with no twist.
This means by twisting annuli of $\bt$ to end in the above zero flux situation
we end up proving $\Tw_{\kappa_z}(\bt)=\pm 2uv$.
    
Without loss of generality we treat the first case of the above claim.
Refer to Figure~\ref{bound6(copy1)}:
on the first row we make the slabs in $\beta$ horizontal,
on the second row we make the slabs in $\gamma$ and its hole horizontal.
If this holds, then we can flip the slabs in $\beta$
so that all of them are horizontal.
In particular, the slabs passing through $\gamma$'s hole are horizontal.
Look at $\gamma$ from the $x$-axis view.
For every pair of consecutive floors of $\gamma$
we can make all slabs horizontal
by using the domino flip connectedness result for planar regions
\cite{saldanhatomei1995,thurston1990}.
\end{proof}

\smallskip
\begin{lemma}
\label{lemma:16n}
Let $\cR = [0,16n]^3$ be a box.
For any triple $t\in ([-2n^4,2n^4]\cap 2\ZZ)^3$
we construct a slab tiling $\bt$ of $\cR$ such that
$\TTw(\bt) = t$.
\end{lemma}

\begin{proof}
    For numbers $u_{i,j},v_{i,j}\in [-n^2,n^2]$, $1\leq i\leq 2$, $1\leq j\leq 3$ we can construct tilings $\bt_{i,j}$ of the $[0,6n]\times [0,6n]\times [0,8n]$ box such that $\TTw(\bt_j)=2u_{i,j}v_{i,j}\mathbf{e_j}$ by Lemma \ref{solenoid}
(here $\mathbf{e_j} \in \ZZ^3$ is the $j$-th vector in the canonical base).
After doing this, we divide the $[0,16n]^3$ box in $8$ cubes of side $8n$. Each of these smaller cubes is tiled using one of the $\bt_{i,j}$ on one corner, and horizontal slabs to fill the rest of the space. Two of the smaller cubes will receive only horizontal slabs. This new tiling is named $\bt$.

Let $t=(t_1,t_2,t_3)$.
Choose 
\[ u_{1,j}=n^2, \quad v_{1,j}=\left\lfloor \frac{t_j}{2n^2}\right\rfloor,
\quad u_{2,j}=1, \quad v_{2,j}=n^2\left\{\frac{t_j}{2n^2}\right\}. \]
Here, for $s \in \RR$,
we have $s = \lfloor s \rfloor + \{s\}$
where $\lfloor s \rfloor \in \ZZ$ and $\{s\} \in [0,1)$.
We have $t_j = 2u_{1,j}v_{1,j} + 2u_{2,j}v_{2,j}$,
as desired.
Then, for each $j$,
$\TTw(\bt_{1,j}+\bt_{2,j})=t_j\mathbf{e_j}\in ([-2n^4,2n^4]\cap 2\ZZ)^3$,
where $\bt_{1,j}+\bt_{2,j}$
is the tiling of the two cubes containing $\bt_{1,j}$ and $\bt_{2,j}$.
    Summing the above for all $1\leq j\leq 3$ yields $\TTw(\bt)=(t_1,t_2,t_3)$.
\end{proof}

\smallskip
\begin{lemma}
\label{lemma:lowerconstant}
    There is a constant $C_0>0$ such that the following assertion holds:
If $\cR=[0,N]^3$, $N\geq 256$, is a cube with even side lengths
(and thus slab tileable) then $\lvert\TTw[\cR]\rvert\geq C_0N^{12}$.
\end{lemma}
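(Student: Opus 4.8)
The plan is to reduce the case of a general even side $N$ to the lemma immediately preceding this one, which already realizes every triple in $([-2n^4,2n^4]\cap 2\ZZ)^3$ as a triple twist on the cube $[0,16n]^3$. Set $n=\lfloor N/16\rfloor$; since $N\geq 256$ we have $n\geq 16$, and because $N$ is even we have $16n\leq N$ with $N-16n$ even. First I would place $B=[0,16n]^3$ in a corner of $\cR=[0,N]^3$ and tile the complement $\cR\setminus B$ using horizontal slabs only. This is possible: on each floor $z\in[k,k+1]$ the cross-section of $\cR\setminus B$ is either the full square $[0,N]^2$ (for $k\geq 16n$) or the L-shaped region $[0,N]^2\setminus[0,16n]^2$ (for $k<16n$); in both cases it decomposes into rectangles with even side lengths, namely $[16n,N]\times[0,N]$ and $[0,16n]\times[16n,N]$, and is therefore tiled by $2\times 2$ squares, that is, by horizontal slabs. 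Call this fixed background tiling $\bt_{bg}$, and note that the all-horizontal tiling of any region has vanishing triple twist, since under each transform it becomes a domino tiling with no vertical dominoes and hence with no nonzero effects.

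Next, for each target $t\in([-2n^4,2n^4]\cap 2\ZZ)^3$ I would take the tiling $\bt_B^{(t)}$ of $B$ furnished by the preceding lemma, with $\TTw(\bt_B^{(t)})=t$, and form the slab tiling $\bt^{(t)}=\bt_B^{(t)}\sqcup\bt_{bg}$ of $\cR$. Since $\bt_B^{(t)}$ tiles the box $B$ on its own, no slab of $\bt^{(t)}$ crosses $\partial B$, so this is a legitimate slab tiling of $\cR$. The heart of the argument is the claim that padding does not alter the triple twist, that is, $\TTw(\bt^{(t)})=t$ for every $t$. Granting this, the map $t\mapsto \TTw(\bt^{(t)})$ is injective, so
\[ |\TTw[\cR]|\geq \bigl|([-2n^4,2n^4]\cap 2\ZZ)^3\bigr|=(2n^4+1)^3\geq 8n^{12}. \]
Finally, since $n=\lfloor N/16\rfloor\geq N/32$ for $N\geq 32$, we obtain $|\TTw[\cR]|\geq 8(N/32)^{12}=2^{-57}N^{12}$, proving the lemma with $C_0=2^{-57}$ (any positive constant would do).

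It remains to justify the padding claim, and this is where I expect the real work to be. Fix a good pair $\kappa$ and pass to the transformed region; the background contributes only horizontal dominoes, so every nonzero effect involves a vertical domino $d_1$ arising from a vertical slab of $\bt_B^{(t)}$ inside $\tilde B$. Effects between two dominoes of $\tilde B$ reproduce exactly the standalone computation of $\Tw_\kappa(\bt_B^{(t)})$ and contribute $t_\kappa$; the only danger is the long-range cross-effects of a vertical $d_1\in\tilde B$ on a horizontal background domino $d_0$ lying on the same floor with overlapping $y$-interval, since the effect does not decay with the $x$-distance. I would control these by a flux-conservation argument in the spirit of \cite{segundoartigo}: for a fixed external $d_0$, the signed sum of effects of all vertical dominoes of $\tilde B$ on $d_0$ is a flux of $\tilde\bt_B$ across a half-plane determined by $d_0$, and this flux is pinned down by the boundary data of the box $B$, hence is the same for every slab tiling of $B$. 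Evaluating it on the all-horizontal tiling of $B$, where there are no vertical dominoes, shows that this constant equals $0$, so the cross-effects cancel and $\Tw_\kappa(\bt^{(t)})=t_\kappa$. Making this flux-conservation statement precise and uniform across the three views $\kappa_x,\kappa_y,\kappa_z$ is, I expect, the main obstacle; everything else is routine bookkeeping with the counting constant.
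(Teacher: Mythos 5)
Your proposal is correct and follows essentially the same route as the paper: the paper writes $N=16M+2k$ with $0\leq k\leq 7$ (your $M=\lfloor N/16\rfloor$), places the cube $[0,16M]^3$ realizing all of $([-2M^4,2M^4]\cap 2\ZZ)^3$ via the preceding lemma in a corner of $\cR$, fills the complement with horizontal slabs, and concludes $\lvert\TTw[\cR]\rvert\geq 8M^{12}\geq C_0N^{12}$ by the same floor/constant bookkeeping (the paper gets $C_0=2^{-46}$, you get $2^{-57}$; either works). The one point of divergence is that the paper silently takes for granted the padding claim --- that surrounding the corner cube with horizontal slabs does not change the triple twist --- whereas you correctly identify this as the nontrivial step and sketch a sound justification (cross-effects of vertical dominoes of $\tilde B$ on background horizontal dominoes reduce to signed counts of vertical dominoes by bipartite class, i.e.\ fluxes across horizontal cuts of the fully tiled box $B$, which are tiling-independent and vanish on the all-horizontal tiling), in the same spirit in which the paper's Lemma~\ref{solenoid} and the $[0,16n]^3$ lemma already combine corner tilings additively.
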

\begin{proof}
      Write $N=16M+2k$ for some $0\leq k\leq 7$.
We can divide $\cR$ into two parts $\cR_1$ and $\cR_2$
as in Figure~\ref{bound4}
(there is one kind of floor:
it comprises one $16M\times 16M$ square on the lower left corner,
$\cR_1$, and its complement, $\cR_2$).
For a given $\mathbf{v}\in ([-2M^4,2M^4]\cap 2\ZZ)^3$,
we tile $\cR_1$ with a tiling $\bt_1$ having $\TTw(\bt_1)=\mathbf{v}$
and we tile $\cR_2$ with horizontal slabs.
     
     \begin{figure}[ht]
         \centering
  \def\svgwidth{4cm}
\begingroup%
  \makeatletter%
  \providecommand\color[2][]{%
    \errmessage{(Inkscape) Color is used for the text in Inkscape, but the package 'color.sty' is not loaded}%
    \renewcommand\color[2][]{}%
  }%
  \providecommand\transparent[1]{%
    \errmessage{(Inkscape) Transparency is used (non-zero) for the text in Inkscape, but the package 'transparent.sty' is not loaded}%
    \renewcommand\transparent[1]{}%
  }%
  \providecommand\rotatebox[2]{#2}%
  \newcommand*\fsize{\dimexpr\f@size pt\relax}%
  \newcommand*\lineheight[1]{\fontsize{\fsize}{#1\fsize}\selectfont}%
  \ifx\svgwidth\undefined%
    \setlength{\unitlength}{367bp}%
    \ifx\svgscale\undefined%
      \relax%
    \else%
      \setlength{\unitlength}{\unitlength * \real{\svgscale}}%
    \fi%
  \else%
    \setlength{\unitlength}{\svgwidth}%
  \fi%
  \global\let\svgwidth\undefined%
  \global\let\svgscale\undefined%
  \makeatother%
  \begin{picture}(1,0.93188011)%
    \lineheight{1}%
    \setlength\tabcolsep{0pt}%
    \put(0,0){\includegraphics[width=\unitlength,page=1]{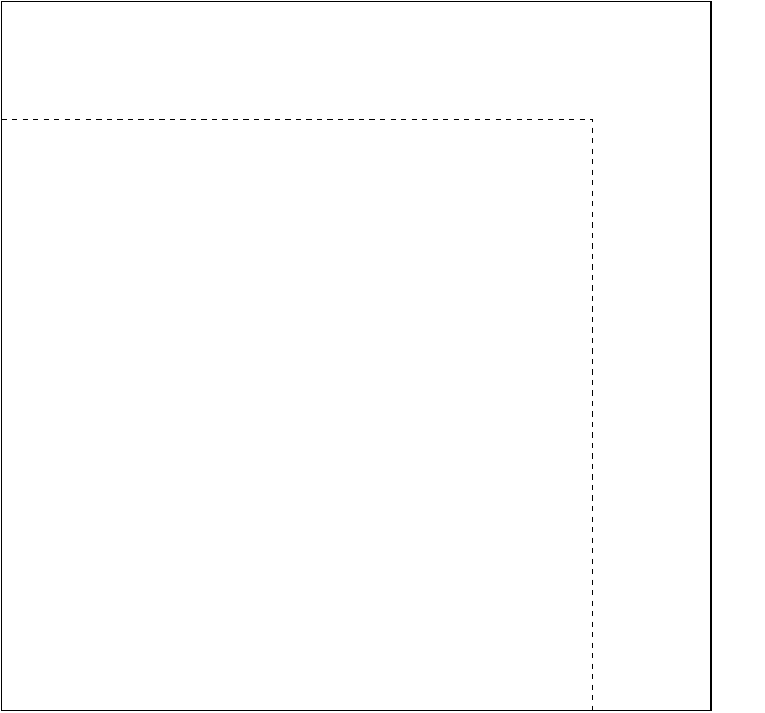}}%
    \put(0.3887551,0.38862699){\color[rgb]{0,0,0}\makebox(0,0)[lt]{\lineheight{1.25}\smash{\begin{tabular}[t]{l}$\mathcal R_1$\end{tabular}}}}%
    \put(0.77532042,0.85250537){\color[rgb]{0,0,0}\makebox(0,0)[lt]{\lineheight{1.25}\smash{\begin{tabular}[t]{l}$\mathcal R_2$\end{tabular}}}}%
  \end{picture}%
\endgroup%

         \caption{Diagram of subdivision of $\cR$}
         \label{bound4}
     \end{figure}
     
     Then,
$\lvert\TTw[\cR]\rvert\geq (2M^4)^3=8M^{12}=
8(\frac{N-2k}{16})^{12}=c(N-2k)^{12}$, where $c=\frac{1}{2^{45}}$.
We seek $C>0$ such that $(N-2k)^{12}\geq CN^{12}$ holds.
This happens if and only if $C\leq (1-\frac{2k}{N})^{12}$.
By minimizing the RHS with the constraints $N\geq 256$ and $k\leq 7$,
we can take $C=\frac{1}{2}\leq (1-\frac{14}{256})^{12}$.
Finally, we ca $\cR=[0,N]^3$n take $C_0=cC=\frac{1}{2^{46}}$.
\end{proof}

We summarize the previous lemmas in the proposition below.
\smallskip
\begin{prop}
\label{prop:bound}
For $C_1 = \frac{1}{2^8}$ and $C_0=\frac{1}{2^{46}}$,
the following assertion holds: \\
If $N\geq 256$ is an even integer,
then $$C_0N^{12}\leq \lvert \TTw[[0,N]^3]\rvert\leq C_1N^{12}.$$
\end{prop}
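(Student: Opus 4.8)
The plan is to simply assemble the two matching bounds already established, since this proposition is nothing more than a packaging of Lemma~\ref{lemma:upperconstant} and Lemma~\ref{lemma:lowerconstant}. For the right-hand inequality I would invoke Lemma~\ref{lemma:upperconstant}, which shows that for any even side length $N\geq 2$ one has $\lvert\TTw[[0,N]^3]\rvert \leq \frac{1}{2^8}N^{12}$; setting $C_1=\frac{1}{2^8}$ gives the upper bound verbatim, and in particular it holds for every even $N\geq 256$. For the left-hand inequality I would invoke Lemma~\ref{lemma:lowerconstant}, which shows that for even $N\geq 256$ one has $\lvert\TTw[[0,N]^3]\rvert \geq \frac{1}{2^{46}}N^{12}$; setting $C_0=\frac{1}{2^{46}}$ gives the lower bound.

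The only remaining task is bookkeeping on the hypotheses. The upper bound requires only that $N$ be even and at least $2$, so the binding constraint is the condition $N\geq 256$ coming from Lemma~\ref{lemma:lowerconstant}; the intersection of the two hypotheses is exactly the stated assumption that $N\geq 256$ is even. Chaining the two inequalities then yields $C_0N^{12}\leq \lvert\TTw[[0,N]^3]\rvert\leq C_1N^{12}$ for all such $N$, which is the claim. One should also note that the constants $C_0$ and $C_1$ named in the proposition coincide \emph{identically} with those produced by the two lemmas, so no re-optimization or rescaling is needed.

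There is essentially no obstacle here: all of the genuine difficulty lives in the two component lemmas. The upper bound rests on the effect-counting argument together with the AM-GM estimate $d_yd_z\leq N^2/4$ in Lemma~\ref{lemma:upperconstant}, while the lower bound rests on the explicit flux-tuned constructions of Lemma~\ref{solenoid} (the tilings $\bt_{u,v}$ with $\TTw=(0,0,2uv)$) packed into eight sub-cubes to realize every target triple in $([-2M^4,2M^4]\cap 2\ZZ)^3$. What this proposition records is that these two arguments match up to give the sharp order of growth $\lvert\TTw[[0,N]^3]\rvert=\Theta(N^{12})$, i.e.\ proportional to the fourth power of the volume $N^3$. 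The exponent $12$ is therefore pinned from both sides; by contrast the constants $C_0$ and $C_1$ are crude and make no claim to optimality.
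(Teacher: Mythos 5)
Your proof is correct and coincides with the paper's own argument: the proposition is proved there exactly by citing Lemmas~\ref{lemma:upperconstant} and~\ref{lemma:lowerconstant} with the constants $C_1=\frac{1}{2^8}$ and $C_0=\frac{1}{2^{46}}$ taken verbatim from their proofs. Your additional bookkeeping on the hypotheses (the upper bound needing only even $N\geq 2$, the lower bound imposing $N\geq 256$) is a sound, if slightly more explicit, rendering of the same assembly.
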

\begin{proof}
We take $C_0$  and $C_1$ as in the proofs of
Lemmas \ref{lemma:upperconstant} and \ref{lemma:lowerconstant}.
\end{proof}

\bigbreak

We are also ready for
Theorem~\ref{theo:tripletwist}.

\begin{proof}[Proof of Theorem~\ref{theo:tripletwist}]
We use the max norm and give explicit values for $C_{\pm}$
assuming $N \ge 256$.
As in Lemma~\ref{lemma:upperconstant}, we can take $C_{+} = \frac14$.
We claim that $C_{-} = 1/{2^{16}}$ works.
Indeed, take $n = \left\lfloor \frac{N}{16} \right\rfloor$
and consider the smaller cube $\cR_{-} = [0,16n]^3$.
Notice that $N^4 < 2^{17} n^4$.
Each coordinate of $t$ is an even integer
with absolute value smaller than $C_{-} N^4 < 2n^4$.
Apply Lemma~\ref{lemma:16n} to deduce that
there exists a tiling $\bt_{-}$ of $\cR_{-}$
with $\TTw(\bt_{-}) = t$.
Finally, construct a tiling $\bt$ of $\cR = [0,N]^3 \supseteq [0,16n]^3$
by starting with $\bt_{-}$ and completing the remaining region
with horizontal slabs.
We have $\TTw(\bt) = \TTw(\bt_{-}) = t$, as desired.
\end{proof}


\section{Final remarks}
\label{section:final}

There are many aspects of the theory of slab tilings
that we seek to understand better.
We list a few.

The first is that in proposition \ref{prop:bound},
the bounding constants were not optimised and thus are far from being sharp.

The second is that we would like to study the flip connected components in more classes of examples, not just the ones in proposition \ref{prop:44N}. For example, we do not know what happens in boxes of height $3$, but we guess there is only one component.

The third is regarding probabilistic results such as the ones for domino tilings in \cite{normal}. For example, for slab tilings we would like to know the distribution for the sizes of connected components of large regions. 

The fourth is regarding other moves besides the flip.
In the domino case, there is also the trit,
and the question on whether all tilings of a given region
are connected by flips and trits is open.
In the slab case,
the existence of a set of local moves
which connect all or most tilings of a given region
is hindered by Remark~\ref{rem:nolocal}.
In the example of Figure~\ref{885}, however,
it is possible to perform moves involving a quantity of pieces
proportional to the lengths of the sides of the box.
It can also be asked how common is a situation comparable
to the example in Lemma~\ref{lemma:moves}
and Corollary~\ref{coro:nolocal}.
It would be interesting to clarify the situation in greater generality.

Also, one property observed in the examples of slab tilings
we studied so far concerns the parity of the entries of the triple twist.
Our final and main conjecture is the following.

\smallskip
\begin{conj}
Let $\cR$ be a region tileable by slabs.
Then there exist $t_x, t_y, t_z
\in \{0,1\} \subset \ZZ$
with the following property:
For every slab tiling $\bt$ of $\cR$, the integers
\[ \Tw_{\kappa_x}(\bt) - t_x, \quad
\Tw_{\kappa_y}(\bt) - t_y, \quad \Tw_{\kappa_z}(\bt) - t_z \]
are all even. If $\cR$ is a box then $t_x = t_y = t_z = 0$.
\end{conj}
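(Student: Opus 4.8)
The plan is to treat the three coordinates separately and to reduce the whole statement to a single parity claim about domino twists. Fix a region $\cR$ and a good pair $\kappa$ for one of the views. Since $\Tw_\kappa(\bt) = \Tw(\tilde\bt_\kappa)$ is by definition the twist of the domino tiling $\tilde\bt_\kappa$ of the fixed transformed region $\tilde\cR_\kappa$, the first and essential assertion is that \emph{the parity of $\Tw(\tilde\bt_\kappa)$ is the same for every slab tiling $\bt$ of $\cR$}. Granting this, the statement for boxes follows by exhibiting a single reference tiling whose three twists are even: a box is symmetric, and for a tiling $\bt$ invariant under a reflection $q$ with $\sign(q)=-1$ that also fixes the relevant pair of colors, Proposition~\ref{prop:slabsymmetry} forces $\Tw_\kappa(\bt) = -\Tw_\kappa(\bt)$, hence $\Tw_\kappa(\bt)=0$. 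The parity claim then propagates evenness to all tilings and yields $t_x=t_y=t_z=0$.

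The difficulty is that one cannot use flip invariance directly: slab tilings of a fixed region are in general not connected under flips, as Proposition~\ref{prop:moves} illustrates, so there is no finite set of local moves over which to run an induction. Instead I would compare two arbitrary slab tilings $\bt$ and $\bt'$ through the difference $\Tw(\tilde\bt_\kappa)-\Tw(\tilde\bt'_\kappa)$ computed on the common transformed region $\tilde\cR_\kappa$. Superimposing the two domino tilings $\tilde\bt_\kappa$ and $\tilde\bt'_\kappa$ decomposes $\tilde\cR_\kappa$ into doubled dominoes together with a disjoint union of closed cycles, and the twist difference splits as a sum of one integer contribution per cycle, via the difference-of-twists formula for domino tilings in \cite{segundoartigo,FKMS}. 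The target then becomes to show that \emph{every such cycle contributes an even amount}.

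The reason to expect evenness is structural and already visible in Lemma~\ref{solenoid}, where twisting a single annulus changes a flux by $\pm 1$ but changes the twist by $\pm 2$; the extra factor of two is forced by the slab geometry. Concretely, a horizontal slab becomes a horizontal domino, while a vertical slab becomes a vertical domino whose two preserved good-colored cubes necessarily lie on different floors, so the good-colored cubes occupy a rigid checkerboard sublattice. I would exploit this to show that any cycle arising from the superposition of two slab-derived tilings crosses a reference cut surface an even number of times, equivalently that it bounds a surface meeting the tiling in an even number of dominoes, so that its twist contribution is even. Carrying this out across all three views, with the colorings kept consistent via Example~\ref{example:slabtype}, is the crux of the argument.

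The step I expect to be the genuine obstacle is precisely this last one: proving evenness of the per-cycle contribution in full generality, rather than in the solenoidal and box examples that have been checked. We currently lack an invariant-theoretic reason, such as a quadratic form over $\ZZ/2$ or a mod-$2$ reduction of the effect sum that manifestly depends only on $\cR$, and the examples, while uniformly even, do not yet expose the mechanism that forbids an odd cycle. Producing either such a reduction or a direct proof that slab-derived cycles always have even crossing number with the relevant surfaces is exactly what separates this conjecture from a theorem.
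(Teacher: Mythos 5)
You have attempted to prove a statement that the paper itself does not prove: this is stated as an open conjecture in Section~\ref{section:final}, so there is no paper proof to compare against, and the relevant question is whether your argument closes the gap the authors left open. It does not, and you say so yourself: the reduction to ``parity of $\Tw(\tilde\bt_\kappa)$ is the same for every slab tiling of $\cR$'' is only a restatement of the conjecture with the base point moved, and the step you defer --- that every cycle in the superposition of $\tilde\bt_\kappa$ and $\tilde\bt'_\kappa$ contributes an even amount to the twist difference --- is precisely the missing content. Two further cautions on that step. First, the difference-of-twists machinery of \cite{segundoartigo,FKMS} does not assign to each cycle an integer depending on that cycle alone: the contribution of a cycle involves its interaction (flux through a bounding surface, or linking-type data) with the rest of the tiling and with the other cycles, so ``even per cycle'' must be formulated with care before it can even be attacked; the factor of $2$ you observe in Lemma~\ref{solenoid} (flux changes by $\pm 1$, twist by $\pm 2$) is evidence in a highly structured family, not a mechanism. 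Second, not every cycle in such a superposition comes from slabs in a way that obviously forces even intersection with a cut surface: vertical slabs lose their pairing under the transformation (this is the same loss of information that makes mixed tilings strictly more general, cf.\ Figure~\ref{fig:mixednotslab}), so the ``rigid checkerboard sublattice'' constraint you invoke applies to the transformed dominoes individually but not visibly to the homology of the cycles.

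Your base case for boxes also has a gap independent of the main one. Proposition~\ref{prop:slabsymmetry} gives $\Tw_\kappa(\bt) = -\Tw_\kappa(\bt)$ only for a tiling $\bt$ that is itself invariant under an orientation-reversing symmetry $q$ with $q[\kappa]=\kappa$ (or $q[\kappa]=\bar\kappa$, correcting the sign via Lemma~\ref{independence}); the box being symmetric does not produce such a tiling. For boxes with a face of even dimensions one can instead take the all-horizontal reference tiling, whose three transformed tilings consist entirely of horizontal dominoes (in the matching view) or entirely of vertical ones (in the other two views), giving $\TTw = (0,0,0)$ directly. But slab-tileable boxes need not admit such a tiling --- the $2\times 3\times 4$ box is tileable by $2\times 1\times 2$ slabs yet no $z$-floor is tileable by $2\times 2$ squares --- so even the anchor tiling requires an argument in general. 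In short: the architecture (superposition, cycle decomposition, mod-$2$ flux) is the natural one and consistent with the paper's toolkit, but as you candidly note, the per-cycle evenness is unproven, and the statement remains a conjecture.
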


\begin{figure}[ht]
        \centering
    \includegraphics[scale=0.5]{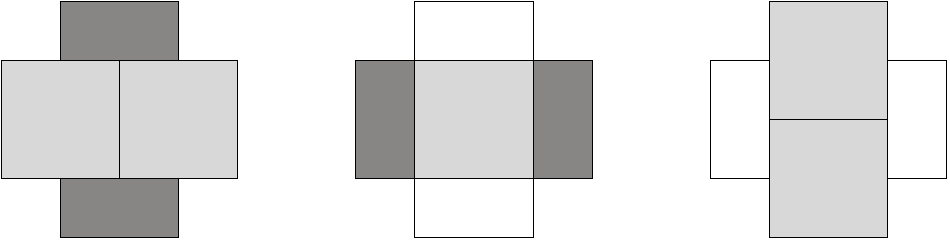}

        \caption{A tiling $\bt$ with $\Tw_{\kappa_z}(\bt) = \pm 1$}
        \label{odd}
    \end{figure}

Figure~\ref{odd} shows a region for which
the conjecture holds with $t_z = 1$.

\section{Declarations}

\subsection{Funding and Acknowledgements}
The second author is thankful for the generous support of
CNPq, CAPES and FAPERJ (Brazil).
The first and third authors are thankful to Projeto Arquimedes and to CNPq
for the support in the PIBIC undergraduate project.





\begin{thebibliography}{10}

\bibitem{FKMS}
J. Freire, C. J. Klivans, P. H. Milet and N. C. Saldanha,
\newblock {On the connectivity of spaces of three-dimensional tilings,}
Trans. Amer. Math. Soc. 375, 1579--1605, 2022. 
\newblock{\url{https://doi.org/10.1090/tran/8532}}





\bibitem{HLT}
I. Hartarsky, L. Lichev, F. Toninelli,
\newblock{Local dimer dynamics in higher dimensions.}
\newblock{\url{https://doi.org/10.48550/arXiv.2304.10930}}

\bibitem{KS}
B. Khesin and N. C. Saldanha,
\newblock{Relative Helicity and Tiling Twist. }
\newblock{\url{https://doi.org/10.48550/arXiv.2408.00522}}



\bibitem{segundoartigo}
Pedro~H Milet and Nicolau~C Saldanha.
\newblock Domino tilings of three-dimensional regions: flips and twists.
\newblock {\url{https://doi.org/10.48550/arXiv.1410.7693}}.


\bibitem{saldanhatomei1995}
Nicolau~C Saldanha, Carlos Tomei, Mario~A Casarin~Jr, and Domingos Romualdo.
\newblock Spaces of domino tilings.
\newblock {\em Discrete \& Computational Geometry}, 14(1):207--233, 1995.
\newblock{\url{https://doi.org/10.1007/BF02570703}}


\bibitem{regulardisk}
Nicolau~C Saldanha.
\newblock {Domino tilings of cylinders:
the domino group and connected components under flips.
Indiana Univ. Math. J. 71 No. 3, 965--1002, 2022.}
\newblock{\url{https://doi.org/10.1512/iumj.2022.71.8880}}








\bibitem{normal}
Nicolau~C Saldanha.
\newblock{Domino tilings of cylinders: connected components under flips and normal distribution of the twist.}
\newblock{Electronic Journal of Combinatorics,
Volume 28, Issue 1, P1.28, 2021.}
\newblock{\url{https://doi.org/10.37236/9779}}

\bibitem{thurston1990}
William~P. Thurston.
\newblock {Conway's Tiling Groups}.
\newblock {\em The American Mathematical Monthly}, 97(8):pp. 757--773, 1990.
\newblock{\url{https://doi.org/10.1080/00029890.1990.11995660}}

\bibitem{notasic}
Arthur M. M. Vieira.
\newblock{Coberturas por dominós em dimensão 2, 3 e maior.}
Notes, 2023. \url{https://www.puc-rio.br/ensinopesq/ccpg/pibic/relatorio_resumo2023/download/relatorios/CTC/MAT/CTC-MAT-Rel000028-002573-00-00.pdf}

\end{thebibliography}
\end{document}